\providecommand{\lang}{UKenglish}
\providecommand{\KOMAopt}{twoside=semi, DIV=default, headinclude}
\definecolor{darkblue}{rgb}{0.0,0.0,0.6}
\newcommand{\op}{\mathrm{op}}
\newcommand{\proj}[1]{P_{#1}}
\newcommand{\donothing}[1]{}
\newcommand{\Ext}{\operatorname{Ext}}
\newcommand{\Hom}{\operatorname{Hom}}
\newcommand{\im}{\operatorname{im}}
\newcommand{\id}{\operatorname{id}}
\newcommand{\iso}{\cong}
\newcommand{\isoto}{\stackrel{\sim}{\to}}
\newcommand{\Ob}{\operatorname{Ob}}
\newcommand{\SL}{\mathrm{SL}}
\newcommand{\cohom}[1]{\mathrm{H}^{#1}}
\newcommand{\bdd}{\mathrm{b}}
\newcommand{\CC}{\mathbb{C}}
\newcommand{\KK}{\mathbb{K}}
\newcommand{\QQ}{\mathbb{Q}}
\newcommand{\ZZ}{\mathbb{Z}}
\newcommand{\defeq}{\coloneqq}
\newcommand{\clustalg}[1]{\mathscr{A}_{#1}}
	\newcommand{\add}{\operatorname{add}}
\newcommand{\clustcat}[1]{\mathcal{C}_{#1}}
	\newcommand{\bdcat}{\mathcal{D}^{\bdd}}
	\newcommand{\bhcat}{\mathcal{K}^{\bdd}}
\newcommand{\indec}{\operatorname{indec}}
	\newcommand{\fgmod}{\operatorname{mod}}
\newcommand{\projcat}{\operatorname{proj}}
\newcommand{\rep}{\operatorname{rep}}
\renewcommand{\epsilon}{\varepsilon}
  \theoremstyle{plain}
\newtheorem{thm}{Theorem}[section]
\newtheorem{thm*}{Theorem}
\newtheorem{cor*}{Corollary}
\newtheorem{prop}[thm]{Proposition}
\newtheorem{phen}{Phenomenon}
\theoremstyle{definition}
\newtheorem{defn}[thm]{Definition}
\newtheorem{eg}[thm]{Example}
\theoremstyle{remark}
\newtheorem{rem}[thm]{Remark}
\numberwithin{equation}{section} \usepackage[backend=biber,citestyle=numeric-comp,bibstyle=numeric,maxbibnames=99,giveninits=true,doi=false,isbn=false,url=false,eprint=true]{biblatex}
\patchcmd\blx@bblinput{\blx@blxinit}
                      {\blx@blxinit
                      }{}{\fail}
\title{From frieze patterns to cluster categories}
\author{Matthew Pressland}
\address[Matthew Pressland]{School of Mathematics \& Statistics\\University of Glasgow\\University Place\\Glasgow G20 8LR\\United Kingdom}
\email{\href{mailto:Matthew.Pressland@glasgow.ac.uk}{Matthew.Pressland@glasgow.ac.uk}}
\urladdr{\url{http://mdpressland.github.io}}
\subjclass[2020]{05E10, 13F60, 16G20, 18G20}
\keywords{Frieze pattern, cluster algebra, cluster category}
\renewcommand{\indec}{\operatorname{ind}}
\renewcommand{\bdcat}[1]{\mathcal{D}^{\bdd}(#1)}
\renewcommand{\bhcat}[1]{\mathcal{K}^{\bdd}(#1)}
\renewcommand{\cohom}[2]{\mathrm{H}^{#1}(#2)}
\renewcommand{\proj}{\projcat}
\begin{document}

\begin{abstract}
Motivated by Conway and Coxeter's combinatorial results concerning frieze patterns, we sketch an introduction to the theory of cluster algebras and cluster categories for acyclic quivers. The goal is to show how these more abstract theories provide a conceptual explanation for phenomena concerning friezes, principally integrality and periodicity.
\end{abstract}
\maketitle

\section{Introduction}

Since their introduction by Fomin and Zelevinsky around the turn of the millennium \cite{FomZel-CA1}, the theory of cluster algebras has had a significant influence on a wide variety of other areas of mathematics. Originally introduced to study positivity and canonical bases in Lie theory \cite{geissclusteralgebra,GHKK,cheunggreedy,Qin-Bases}, applications have been found in combinatorics \cite{BMDMTY,IngTho}, Teichmüller theory \cite{FocGon,FomZel-CA2}, algebraic geometry and mirror symmetry \cite{Bridgeland-Scattering,Wemyss-HomMMP,grossbirational,augusttilting}, integrable systems \cite{Keller-Periodicity,fordydiscrete}, mathematical physics \cite{BKM,Franco-BFTs} and beyond---the references given here are by no means exhaustive, and more can be found in the introduction to \cite{kellercluster}. In these notes we will focus on the influence of cluster algebras on the representation theory of finite-dimensional algebras, leading to the development of cluster categories by Caldero, Chapoton and Schiffler \cite{calderoquivers1,calderoquivers2} and Buan, Marsh, Reineke, Reiten and Todorov \cite{BMRRT} (and later, in more generality than covered here, by Amiot \cite{Amiot-ClustCat}).

We start in Section~\ref{s:friezes} with the combinatorics of frieze patterns, first studied by Coxeter \cite{coxeterfrieze} and then Conway--Coxeter \cite{conwaytriangulated1,conwaytriangulated2} several decades before the introduction of cluster algebras. These patterns consist of an infinite (but periodic) strip of positive integers, and exhibit many remarkable combinatorial phenomena. To shed light on these phenomena, we will explain in Section~\ref{s:clust-alg} how the entries of the frieze can be viewed as specialisations of cluster variables, certain Laurent polynomials which are distinguished elements of a cluster algebra. Continuing this approach, further combinatorial properties of friezes, including the aforementioned periodicity, will be explained in Section~\ref{s:clust-cat} via categorification, whereby a representation-theoretically defined cluster category is introduced to provide a new perspective on the combinatorics of cluster algebras.

In this way we aim to illustrate how each successively more abstract framework---first passing from friezes to cluster algebras, and then to cluster categories---can lead to clean explanations of phenomena appearing at the previous level. This path can be followed further than we have space to do here, and leads to, for example, the theory of quivers with potential \cite{DWZ1} and their more general cluster categories \cite{Amiot-ClustCat}, Frobenius cluster categories and their applications to cluster algebras appearing in geometry \cite{BIRSc, GLS-PFVs, JKS, Pressland-iCY, Pressland-Postnikov}, Adachi--Iyama--Reiten's $\tau$-tilting theory \cite{adachitau}, and the theory of stability conditions and scattering diagrams \cite{Bridgeland-Scattering,BST,GHKK}.

These notes are not intended as a comprehensive exposition of the subjects we cover, but rather as a concise introduction which we hope the curious reader will use as a jumping-off point for further study. To that end, we give here a short list of suggestions for further reading covering these topics in more detail, more of which can be found in the references throughout the notes. To the reader looking for a more detailed survey of the representation-theoretic approach to cluster algebras, we recommend Keller \cite{kellercluster}. Several books on cluster algebras are available: that by Gekhtman, Shapiro and Vainshtein \cite{gekhtmancluster} focusses on connections to Poisson geometry, whereas that by Marsh \cite{marshlecture} covers many combinatorial aspects of the theory. A further book, by Fomin, Williams and Zelevinsky, is in progress, and (at the time of writing) the first seven chapters can be found on arXiv \cite{FWZ1-3,FWZ4-5,FWZ6,FWZ7}. Up-to-date information concerning the development of the theory of cluster algebras and its many applications can be found on the Cluster Algebras Portal \cite{clusterportal}.

Schiffler's recent book on representations of quivers \cite{schifflerquiver} contains a section on the properties of cluster-tilted algebras, a class of algebras we will mention briefly in Section~\ref{s:clust-cat} and which arises from cluster categories. Also recommended are books on representations of quivers and other finite-dimensional algebras by Assem--Simson--Skowroński \cite{assemelements} and Auslander--Reiten--Smalø \cite{auslanderrepresentationtheory}. 

For a more substantial discussion of the rich combinatorial theory of friezes, we recommend the survey by Morier-Genoud \cite{moriergenoudcoxeter} and a paper by Propp \cite{proppcombinatorics}.

\section{Frieze Patterns}

Frieze patterns were first introduced by Coxeter \cite{coxeterfrieze} in 1971. Many of their combinatorial properties can be found in a paper by Conway and Coxeter published in two parts, the first \cite{conwaytriangulated1} consisting of a list of problems, the solutions to which are provided in the second \cite{conwaytriangulated2}.

\label{s:friezes}
\begin{defn}
\label{d:frieze}
A \emph{frieze pattern} (or simply a frieze) of \emph{height} $n$ consists of $n+2$ rows of positive integers, typically written in a slightly offset fashion as in the following example (with $n=6$): 
\begin{center}
\resizebox{0.95\textwidth}{!}{$\begin{tikzpicture}[ampersand replacement=\&]
\matrix[matrix of math nodes](m){
\cdots\&\phantom{14}\&\mathbf{1}\&\phantom{14}\&\mathbf{1}\&\phantom{14}\&\mathbf{1}\&\phantom{14}\&\mathbf{1}\&\phantom{14}\&\mathbf{1}\&\phantom{14}\&\mathbf{1}\&\phantom{14}\&\mathbf{1}\&\phantom{14}\&\mathbf{1}\&\phantom{14}\&\mathbf{1}\&\phantom{14}\&\mathbf{1}\&\phantom{14}\&\cdots\\
\&{3}\&\&{1}\& \&{2}\& \&{3}\& \&{2}\& \&{2}\& \&{2}\&\&1\&\&5\& \&3\& \&1\&\\
\cdots\& \&{2}\& \&{1}\& \&{5}\& \&{5}\& \&{3}\& \&{3}\& \&1\& \&4\& \&14\& \&2\& \&\cdots\\
\&9\& \&{1}\& \&{2}\& \&{8}\& \&{7}\& \&{4}\& \&1\& \&3\& \&11\& \&9\& \&1\&\\
\cdots\& \&4\& \&{1}\& \&{3}\& \&{11}\& \&{9}\& \&1\& \&2\& \&8\& \&7\& \&4\& \&\cdots\\
\&3\& \&3\& \&{1}\& \&{4}\& \&{14}\& \&2\& \&1\& \&5\& \&5\& \&3\& \&3\&\\
\cdots\& \&2\& \&2\& \&{1}\& \&{5}\& \&3\& \&1\& \&2\& \&3\& \&2\& \&2\& \&\cdots\\
\&\mathbf{1}\&\phantom{14}\&\mathbf{1}\&\phantom{14}\&\mathbf{1}\&\phantom{14}\&\mathbf{1}\&\phantom{14}\&\mathbf{1}\&\phantom{14}\&\mathbf{1}\&\phantom{14}\&\mathbf{1}\&\phantom{14}\&\mathbf{1}\&\phantom{14}\&\mathbf{1}\&\phantom{14}\&\mathbf{1}\&\phantom{14}\&\mathbf{1}\&\\
};
\end{tikzpicture}$}
\end{center}
The defining properties are that
\begin{enumerate}
\item every entry in the first and final row is $1$, and
\item the entries satisfy the \emph{$\SL_2$ diamond rule}, meaning that every local configuration
\[\begin{array}{ccc}
&b&\\
a& &d\\
&c&
\end{array}\]
satisfies $ad-bc=1$. In other words, the matrix $\left(\begin{smallmatrix}a&b\\c&d\end{smallmatrix}\right)$ is an element of $\SL_2(\ZZ)$.
\end{enumerate}
We call the first and last row of the frieze, consisting only of $1$s, \emph{trivial} rows, and will always write these entries in bold to emphasise their fixed value. Note that the height measures the number of non-trivial rows.
\end{defn}

Some readers may find Definition~\ref{d:frieze} a little informal, preferring to think of a frieze pattern as a function assigning integer values to some set giving the points of the lattice diagram on which these integers are drawn. We will take this viewpoint later on Section~\ref{s:clust-cat}, but for the moment remain circumspect about which set this indeed is.

Because of the $\SL_2$ diamond rule, any three of the frieze entries in a diamond configuration determine the fourth. As a consequence, there are several ways to specify a subset of entries of the frieze in such a way that the rest are determined inductively by this rule.

In these notes, we will be most interested in specifying a frieze by choosing a \emph{lightning bolt}, consisting of one entry in each row such that the chosen entries in each pair of adjacent rows form part of the same diamond. An example is given by the circled entries in the frieze pattern below.
\begin{center}
\resizebox{0.95\textwidth}{!}{$\begin{tikzpicture}[ampersand replacement=\&]
\matrix[matrix of math nodes](m){
\cdots\&\phantom{14}\&\mathbf{1}\&\phantom{14}\&\mathbf{1}\&\phantom{14}\&\mathbf{1}\&\phantom{14}\&\mathbf{1}\&\phantom{14}\&\mathbf{1}\&\phantom{14}\&\mathbf{1}\&\phantom{14}\&\mathbf{1}\&\phantom{14}\&\mathbf{1}\&\phantom{14}\&\mathbf{1}\&\phantom{14}\&\mathbf{1}\&\phantom{14}\&\cdots\\
\&{3}\&\phantom{1}\&{1}\& \&{2}\& \&{3}\& \&{2}\& \&{2}\& \&{2}\&\&1\&\&5\& \&3\& \&1\&\\
\cdots\& \&{2}\&\phantom{1}\&{1}\&\phantom{1}\&{5}\& \&{5}\& \&{3}\& \&{3}\& \&1\& \&4\& \&14\& \&2\& \&\cdots\\
\&9\&\phantom{1}\&{1}\&\phantom{1}\&{2}\& \&{8}\& \&{7}\& \&{4}\& \&1\& \&3\& \&11\& \&9\& \&1\&\\
\cdots\& \&4\& \&{1}\& \&{3}\& \&{11}\& \&{9}\& \&1\& \&2\& \&8\& \&7\& \&4\& \&\cdots\\
\&3\& \&3\& \&{1}\& \&{4}\& \&{14}\& \&2\& \&1\& \&5\& \&5\& \&3\& \&3\&\\
\cdots\& \&2\& \&2\& \&{1}\&\phantom{1}\&{5}\& \&3\& \&1\& \&2\& \&3\& \&2\& \&2\& \&\cdots\\
\&\mathbf{1}\&\phantom{14}\&\mathbf{1}\&\phantom{14}\&\mathbf{1}\&\phantom{14}\&\mathbf{1}\&\phantom{14}\&\mathbf{1}\&\phantom{14}\&\mathbf{1}\&\phantom{14}\&\mathbf{1}\&\phantom{14}\&\mathbf{1}\&\phantom{14}\&\mathbf{1}\&\phantom{14}\&\mathbf{1}\&\phantom{14}\&\mathbf{1}\&\\
};
\draw[rounded corners,thick]
(m-2-3.east) -- (m-2-4.north) -- (m-3-6.west) -- (m-4-5.west) -- (m-7-8.west) -- (m-8-7.north) -- (m-4-3.east) -- (m-3-4.east) -- cycle;
\end{tikzpicture}$}
\end{center}
Having chosen positive integers to fill the entries in a lightning bolt, we can compute the other entries of the frieze by repeated use of the diamond rule. (The figures in the proof of Theorem~\ref{t:cluster-frieze} below may be helpful in seeing this.) However, this process requires division---for example, if we have entries $a$, $b$ and $c$ in a diamond, then
\[d=\frac{1+bc}{a}.\]
Thus, even if $a$, $b$ and $c$ are all positive integers, as required by the definition of a frieze, there is no guarantee that $d$ will also be an integer (although it will at least be positive). The first phenomenon that we will be concerned with in these notes is the following.

\begin{phen}
\label{ph:lightning-bolt}
Given a lightning bolt, setting its entries equal to $1$ determines a unique frieze pattern. More concretely, we can compute all other entries via the diamond rule, and all of them are positive integers, as the definition requires.
\end{phen}

\begin{eg}
\label{eg:no-bolt}
Given Phenomenon~\ref{ph:lightning-bolt}, one might reasonably ask whether all frieze patterns are obtained by choosing a lightning bolt and settings its entries to $1$. The answer, however, is no; the smallest example exhibiting this, for which $n=3$, is shown below.

\[
\begin{tikzpicture}
\matrix[matrix of math nodes](m){
&\mathbf{1}& &\mathbf{1}& &\mathbf{1}& &\mathbf{1}& &\mathbf{1}& &\mathbf{1}& &\mathbf{1}& &\mathbf{1}&\\
\cdots& &1& &3& &1& &3& &1& &3& &1& &\cdots\\
&2& &2& &2& &2& &2& &2& &2& &2&\\
\cdots& &3& &1& &3& &1& &3& &1& &3& &\cdots\\
&\mathbf{1}& &\mathbf{1}& &\mathbf{1}& &\mathbf{1}& &\mathbf{1}& &\mathbf{1}& &\mathbf{1}& &\mathbf{1}&\\
};
\end{tikzpicture}
\]
\end{eg}

The alert reader may have already noticed that both of the examples of frieze patterns given so far are highly symmetric; indeed, their rows are periodic with period (dividing) $n+3$. This turns out to be the case for all frieze patterns, and is a consequence of a slightly stronger periodicity.

\begin{phen}
\label{ph:glide}
Any frieze pattern is periodic under a glide reflection, for which a fundamental domain (excluding the trivial rows) consists of a triangle with $n+1$ elements in the base, but missing its peak, as shown in the example:
\begin{center}
\resizebox{0.95\textwidth}{!}{$\begin{tikzpicture}[ampersand replacement=\&]
\matrix[matrix of math nodes](m){
\cdots\&\phantom{14}\&\mathbf{1}\&\phantom{14}\&\mathbf{1}\&\phantom{14}\&\mathbf{1}\&\phantom{14}\&\mathbf{1}\&\phantom{14}\&\mathbf{1}\&\phantom{14}\&\mathbf{1}\&\phantom{14}\&\mathbf{1}\&\phantom{14}\&\mathbf{1}\&\phantom{14}\&\mathbf{1}\&\phantom{14}\&\mathbf{1}\&\phantom{14}\&\cdots\\
\&{3}\&\phantom{1}\&{1}\& \&{2}\& \&{3}\& \&{2}\& \&{2}\& \&{2}\&\&1\&\phantom{1}\&5\& \&3\& \&1\&\\
\cdots\& \&{2}\& \&{1}\& \&{5}\& \&{5}\& \&{3}\& \&{3}\& \&1\& \&4\& \&14\& \&2\& \&\cdots\\
\&9\& \&{1}\& \&{2}\& \&{8}\& \&{7}\& \&{4}\& \&1\& \&3\& \&11\& \&9\& \&1\&\\
\cdots\& \&4\& \&{1}\& \&{3}\& \&{11}\& \&{9}\& \&1\& \&2\& \&8\& \&7\& \&4\& \&\cdots\\
\&3\& \&3\& \&{1}\& \&{4}\& \&{14}\& \&2\& \&1\& \&5\& \&5\& \&3\& \&3\&\\
\cdots\& \&2\& \&2\& \&{1}\& \&{5}\& \&3\& \&1\& \&2\& \&3\& \&2\& \&2\& \&\cdots\\
\&\mathbf{1}\&\phantom{14}\&\mathbf{1}\&\phantom{14}\&\mathbf{1}\&\phantom{14}\&\mathbf{1}\&\phantom{14}\&\mathbf{1}\&\phantom{14}\&\mathbf{1}\&\phantom{14}\&\mathbf{1}\&\phantom{14}\&\mathbf{1}\&\phantom{14}\&\mathbf{1}\&\phantom{14}\&\mathbf{1}\&\phantom{14}\&\mathbf{1}\&\\
};
\draw[rounded corners,thick]
(m-2-3.east) -- (m-2-4.north) -- (m-2-16.north) -- (m-2-17.west) -- (m-7-11.south east) -- (m-7-9.south west) -- cycle;
\end{tikzpicture}$}
\end{center}
\end{phen}
If we do include the trivial rows, a fundamental domain is given simply by a triangle with base having $n+2$ elements, but the above description will be more recognisable later on in Section~\ref{s:clust-cat}.

We will explain these phenomena, including the appearance of the frieze pattern in Example~\ref{eg:no-bolt}, in the coming sections, via the much more general theory of cluster algebras and cluster categories. However, before moving on to these topics, we take a digression to describe another way of generating frieze patterns, and a striking result of Conway and Coxeter.

\begin{defn}
Given a frieze pattern with height $n$, the (doubly-infinite) sequence of integers appearing in the first non-trivial row is called its \emph{quiddity sequence}. Since we have already remarked that this sequence is periodic of period $n+3$, we may equivalently take it to be a finite sequence of length $n+3$, considered up to cyclic reordering, i.e., up to the smallest equivalence relation with
\[(a_1,a_2,\dotsc,a_{n+3})\sim(a_{n+3},a_1,\cdots,a_{n+2}).\]
\end{defn}

Just as for the lightning bolt, knowing the quiddity sequence of a frieze pattern is sufficient to reconstruct the entire frieze, by repeated use of the diamond rule. Thus, given a sequence $(a_1,a_2,\dotsc,a_{n+3})$, we can draw the rows
\begin{center}
\resizebox{0.95\textwidth}{!}{$\begin{tikzpicture}[ampersand replacement=\&]
\matrix[matrix of math nodes](m){
\&\mathbf{1}\&\phantom{a_{n+3}}\&\mathbf{1}\&\phantom{a_{n+3}}\&\mathbf{1}\&\phantom{a_{n+3}}\&\mathbf{1}\& \&\cdots\&\phantom{a_{n+3}}\&\mathbf{1}\&\phantom{a_{n+3}}\&\mathbf{1}\&\phantom{a_{n+3}}\&\mathbf{1}\&\phantom{a_{n+3}}\&\mathbf{1}\&\\
\cdots\& \&a_{n+3}\& \&a_1\& \&a_2\& \&\cdots\& \&\cdots\& \&a_{n+3}\& \&a_1\& \&a_2\& \&\cdots\\
};
\end{tikzpicture}$}
\end{center}
and attempt to complete them to a frieze pattern below. However, now there are many obstructions to success---as well as the problem with the lightning bolt construction, whereby a priori one could obtain non-integer entries, there is no reason why the process of computing further rows of the frieze should not continue indefinitely, rather than terminating with a second trivial row of $1$s after the $n$ non-trivial rows we expect. Moreover, we now need to use the diamond rule in the situation in which $a$, $b$ and $d$ are known, and the computation
\[c=\frac{ad-1}{b},\]
involves subtraction. As a result, we also need to avoid the situation in which $a=d=1$ (when these are entries of a non-trivial row), because in that case we would compute $c=0$ in the next row, and thus fail to obtain a valid frieze.

Conway and Coxeter gave a remarkable method for producing all $(n+3)$-periodic sequences which are valid quiddity sequences, involving triangulations of polygons.

\begin{defn}
\label{d:triangulation}
Consider a convex polygon with $n+3$ sides, drawn in the plane. A \emph{triangulation} of this polygon is a maximal collection of pairwise non-crossing diagonals. In other words, it is a collection of diagonals which cut the polygon into triangles, as in the following example with $n=6$.
\[
\begin{tikzpicture}[scale=0.9]
\foreach \n in {0,1,2,3,4,5,6,7,8}
{\coordinate (v\n) at (90-40*\n:2);}
\foreach \n/\m in {0/1,1/2,2/3,3/4,4/5,5/6,6/7,7/8,8/0}
{\draw[very thick] (v\n) -- (v\m);}
\draw (v1) -- (v8) -- (v2) -- (v7) -- (v3) (v7) -- (v4) (v7) -- (v5);
\end{tikzpicture}
\]
Note that we do not consider the sides of the polygon to be diagonals---if we did, they would be contained in all triangulations.
\end{defn}

Each triangulation has an associated $(n+3)$-periodic sequence defined as follows. To each vertex $i$ of the polygon, attach the integer $a_i$ equal to the number of elements of $\mathcal{T}$ incident with $i$, plus $1$. (Visually, $a_i$ is the number of triangles incident with $i$ in the triangulation.) Since the vertices of the polygon are cyclically ordered (say clockwise), we obtain an $(n+3)$-periodic sequence $(a_1,\dotsc,a_{n+3})$. By using these sequences, Conway and Coxeter are able to classify frieze patterns in terms of triangulations.

\begin{thm}[Conway--Coxeter {\cite[(28)--(29)]{conwaytriangulated1,conwaytriangulated2}}]
\label{t:class1}
An $(n+3)$-periodic sequence is the quiddity sequence of a height $n$ frieze if and only if it arises from a triangulation of an $(n+3)$-gon as in the preceding paragraph. This gives a bijection between triangulations of polygons (up to rotation) and frieze patterns (up to translation).
\end{thm}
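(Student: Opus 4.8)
The plan is to prove the equivalence and the bijection together by induction on $n$, with the driving operation being the addition and removal of an \emph{ear}, by which I mean a vertex $i$ of the polygon incident to a single triangle of $\mathcal{T}$, equivalently a vertex with $a_i=1$. First I would observe that the statement really concerns a single map: a frieze is reconstructed from its quiddity sequence by repeated use of the diamond rule, and a triangulation $\mathcal{T}$ of the $(n+3)$-gon determines its quiddity sequence directly (well-defined up to cyclic reordering, i.e.\ up to rotating the polygon), so the whole theorem amounts to analysing $\mathcal{T}\mapsto(a_1,\dots,a_{n+3})$: I must identify its image with the valid quiddity sequences, and check it is injective up to rotation. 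Note that $a_i=1$ forces the unique triangle at $i$ to be the one cut off by the two polygon edges at $i$, so removing $i$ together with those edges leaves a genuine triangulation $\mathcal{T}'$ of the $(n+2)$-gon.

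The first input I would record is purely combinatorial: every triangulation of a polygon with at least four vertices has at least two ears (immediate by induction on the number of vertices, or from the dual tree). The second is the local dictionary matching the ear-removal $\mathcal{T}\mapsto\mathcal{T}'$ with operations on sequences and on friezes: on quiddity sequences it deletes the entry $a_i=1$ and subtracts $1$ from each of its two neighbours (which are automatically at least $2$, since a valid quiddity sequence has no two consecutive entries equal to $1$); on friezes it deletes the single ``strand'' of entries running from that entry of the quiddity row down to the bottom trivial row, and one checks by direct inspection---the pictures in the proof of Theorem~\ref{t:cluster-frieze} are the relevant ones---that this sends a valid height-$n$ frieze with positive integer entries to a valid height-$(n-1)$ one, reversibly. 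Granting these, the ``if'' direction is a quick induction: given $\mathcal{T}$, remove an ear to get $\mathcal{T}'$, which by the inductive hypothesis has quiddity sequence completing to a genuine height-$(n-1)$ frieze, and then re-insert the strand to obtain a valid height-$n$ frieze with quiddity sequence $(a_1,\dots,a_{n+3})$; the base case $n=1$ is the single rotation-class of square triangulations, with quiddity sequence $(1,2,1,2)$.

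For the ``only if'' direction the essential point---and the step I expect to be the main obstacle---is that every valid quiddity sequence with $n\geq 1$ must contain an entry equal to $1$. Here I would invoke the matrix reformulation of friezes: writing $M(a)=\left(\begin{smallmatrix}a&-1\\1&0\end{smallmatrix}\right)$, iterating the diamond rule once around a full period shows that a valid quiddity sequence satisfies $M(a_1)\cdots M(a_{n+3})=-\mathrm{Id}$ in $\SL_2(\ZZ)$. But for $a\geq 2$ the matrix $M(a)$ sends $\left(\begin{smallmatrix}x\\y\end{smallmatrix}\right)$ to $\left(\begin{smallmatrix}ax-y\\x\end{smallmatrix}\right)$, and if $x>y>0$ then $ax-y>x>y>0$; hence $M(a)$ maps the open cone $\{x>y>0\}$ into itself, so a product of such matrices cannot equal $-\mathrm{Id}$, which reverses that cone. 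Therefore some $a_j=1$, and applying the dictionary backwards the reduced sequence is a valid height-$(n-1)$ quiddity sequence, hence by induction the quiddity sequence of a triangulation $\mathcal{T}'$ of the $(n+2)$-gon; attaching an ear at position $j$ produces a triangulation realising $(a_1,\dots,a_{n+3})$. Injectivity up to rotation follows by the same scheme: two triangulations of the $(n+3)$-gon with the same quiddity sequence share every ear (with the same cut-off triangle), so removing one reduces to the $(n+2)$-gon; combining this with the two directions above and translating between rotation-classes of triangulations, cyclic classes of valid quiddity sequences, and translation-classes of friezes yields the asserted bijection. The only genuinely delicate parts are establishing the matrix identity (so that the cone argument applies) and verifying the frieze side of the strand insertion/deletion; everything else is bookkeeping on the induction.
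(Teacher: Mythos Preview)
The paper does not supply its own proof of this theorem: it is stated with a citation to Conway--Coxeter \cite{conwaytriangulated1,conwaytriangulated2} and then used without further argument. So there is no proof in the paper to compare your proposal against.

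That said, your outline is the standard modern proof of the Conway--Coxeter bijection, and the strategy is sound. The ear-removal induction, the observation that every triangulation of a polygon has an ear, the matrix identity $M(a_1)\cdots M(a_{n+3})=-\mathrm{Id}$, and the cone-preservation argument forcing some $a_j=1$ are all correct and well known. Two places deserve a little more care than you have given them. First, the claim that deleting the diagonal strand through a quiddity entry equal to $1$ yields a valid height-$(n-1)$ frieze is not quite what the pictures in the proof of Theorem~\ref{t:cluster-frieze} show (those depict source mutations sweeping out the frieze, not strand deletion); the verification is elementary but is a separate computation, and the reversibility---that inserting a $1$ and incrementing the neighbours always produces positive integers throughout the new strand---needs to be checked directly. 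Second, your derivation of the matrix identity from the diamond rule is asserted rather than shown; while standard, it is the step that links the frieze axioms to the $\SL_2$ product, and a reader would expect at least a one-line indication of how the recursion for successive diagonals of the frieze produces the product of the $M(a_i)$. Neither of these is a genuine gap in the mathematics, only in the exposition.
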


\begin{rem}
\label{r:class1}
Later on it will be more natural to consider frieze patterns up to the weaker symmetry of the glide reflection appearing in Phenomenon~\ref{ph:glide}. Modifying Theorem~\ref{t:class1}, these friezes are also classified by triangulations of an $(n+3)$-gon, but one in which the vertices are numbered clockwise by $1,\dotsc,n+3$ to break the symmetry of this polygon.
\end{rem}

For the example of a triangulation in Definition~\ref{d:triangulation}, we obtain the $(n+3)$-periodic sequence $(1,2,3,2,2,2,1,5,3)$, which is the quiddity sequence of the frieze in Definition~\ref{d:frieze}. The frieze pattern in Example~\ref{eg:no-bolt}, not arising from a lightning bolt, has quiddity sequence $(1,3,1,3,1,3)$, which corresponds to the triangulation
\[
\begin{tikzpicture}[scale=0.9]
\foreach \n in {0,1,2,3,4,5}
{\coordinate (v\n) at (90-60*\n:2);}
\foreach \n/\m in {0/1,1/2,2/3,3/4,4/5,5/0}
{\draw[very thick] (v\n) -- (v\m);}
\draw (v1) -- (v5) -- (v3) -- (v1);
\end{tikzpicture}
\]
of the hexagon. It is not a coincidence that the frieze not arising from a lightning bolt corresponds to a triangulation featuring an internal triangle, not involving any edges of the polygon, and the frieze that does arise from a lightning bolt corresponds to a triangulation with no internal triangles.

We close this section with an experiment. Returning to the lightning bolt method for constructing friezes, rather than setting the entries in our lightning bolt equal to $1$, or to other positive integers, we can fill them with formal variables $x_1,\dotsc,x_n$. Using the diamond rule, we can then construct a `frieze' whose entries are rational functions in $x_1,\dotsc,x_n$. We do this below for the simplest case of a height two frieze (for which any two lightning bolts are related by translation and reflection).
\begin{center}
\resizebox{0.95\textwidth}{!}{$\begin{tikzpicture}[ampersand replacement=\&]
\matrix[matrix of math nodes](m){
\&\mathbf{1}\&\phantom{1+x_2}\&\mathbf{1}\&\&\mathbf{1}\&\&\mathbf{1}\&\phantom{1+x_2}\&\mathbf{1}\&\\
\cdots\&\&x_1\&\&\dfrac{1+x_2}{x_1}\&\&\dfrac{1+x_1}{x_2}\&\&x_2\&\&\cdots\\
\&\dfrac{1+x_1}{x_2}\&\&x_2\&\&\dfrac{1+x_1+x_2}{x_1x_2}\&\&x_1\&\&\dfrac{1+x_2}{x_1}\&\\
\cdots\&\&\mathbf{1}\&\phantom{1+x_2}\&\mathbf{1}\&\&\mathbf{1}\&\phantom{1+x_2}\&\mathbf{1}\&\&\cdots\\
};
\end{tikzpicture}$}
\end{center}
This process exhibits  a \emph{Laurent phenomenon}; all of the rational functions appearing are Laurent polynomials, meaning they have a monomial denominator. To see that this is not a priori clear, observe that when computing the entry $\frac{1+x_1}{x_2}$ via the $\SL_2$ diamond rule from the three entries to its left, we calculate
\[
\dfrac{1+\dfrac{1+x_1+x_2}{x_1x_2}}{\dfrac{1+x_2}{x_1}}=\dfrac{x_1(1+x_1+x_2+x_1x_2)}{x_1x_2(1+x_2)}
=\dfrac{(1+x_1)(1+x_2)}{x_2(1+x_2)}
=\dfrac{1+x_1}{x_2},
\]
and see that the calculation initially produces a non-monomial denominator, but miraculously this has a common factor with the numerator, in such a way that we are left with a monomial again after simplifying. Note that the integrality of Phenomenon~\ref{ph:lightning-bolt} follows directly from this Laurent phenomenon---evaluating a Laurent polynomial in $x_1,\dotsc,x_n$ at the point $(1,\dotsc,1)$ will always give an integer.

The five Laurent polynomials appearing in the non-trivial rows of the above `frieze' turn out to be the five cluster variables of a cluster algebra of type $\mathsf{A}_2$. In the next section, we will explain what cluster algebras and their cluster variables are in some generality, and explain the connection between friezes of height $n$ and cluster algebras of type $\mathsf{A}_n$.

\section{Cluster algebras}
\label{s:clust-alg}

In this section, we introduce cluster algebras. These algebras were first defined by Fomin--Zelevinsky \cite{FomZel-CA1}, who (together with Berenstein) developed the theory in a series of seminal papers \cite{BFZ-CA3,FomZel-CA2,FomZel-CA4}. For our purposes, it will be sufficient to restrict to the case of cluster algebras arising from quivers, and without frozen variables.

A quiver is a directed graph. More precisely, it is a tuple $Q=(Q_0,Q_1,h,t)$, where $Q_0=\{1,\dotsc,n\}$ is a set of vertices, $Q_1$ is a set of arrows, and $h,t\colon Q_1\to Q_0$ are functions specifying the head and tail respectively of each arrow. We represent quivers visually, as in the examples below.
\[
\begin{tikzcd}[column sep=17pt,row sep=15pt]
1\arrow{r}&2\arrow{r}&3&4\arrow{l}\\&5\arrow{u}
\end{tikzcd}
\qquad
\begin{tikzcd}[column sep=17pt,row sep=15pt]
&1\arrow{dl}\arrow{dr}\\
2\arrow{dr}&&3\arrow{dl}\\
&4\arrow{uu}
\end{tikzcd}
\qquad
\begin{tikzcd}[column sep=17pt,row sep=15pt]
&2\arrow{dr}\\
1\arrow{ur}\arrow[shift left]{rr}\arrow[shift right]{rr}&&3
\end{tikzcd}
\]

\begin{defn}
A \emph{cluster quiver} is a quiver $Q$ without oriented cycles of length $1$ or $2$. In other words, no arrow $a\in Q_1$ can have $h(a)=t(a)$ (there are \emph{no loops}) and the configuration $\begin{tikzcd}i\arrow[shift left]{r}&j\arrow[shift left]{l}\end{tikzcd}$ is not permitted (there are \emph{no $2$-cycles}).
\end{defn}

\begin{rem}
Given a quiver $Q$ with $Q_0=\{1,\dotsc,n\}$, we can consider its \emph{signed adjacency matrix}, the $n\times n$ matrix $M(Q)$ with entries
\begin{equation}
\label{eq:MQ}
m_{ij}=\#\{\text{arrows $i\to j$ in $Q_1$}\} - \#\{\text{arrows $j\to i$ in $Q_1$}\}.
\end{equation}
Note that $M(Q)$ is skew-symmetric, meaning its transpose is equal to its negative. Fomin--Zelevinsky define cluster algebras in terms of such skew-symmetric matrices (or more generally, skew-symmetrisable matrices, defined as those which become skew-symmetric after multiplication with some diagonal matrix), whereas we opt for the more graphical language of quivers. The condition that $Q$ is a cluster quiver means that only one term on the right-hand side of \eqref{eq:MQ} may be non-zero, and that $Q$ can be reconstructed up to isomorphism from the data of $M(Q)$. Indeed, given a skew-symmetric $n\times n$ matrix $M$, the quiver $Q(M)$ with vertex set $\{1,\dotsc,n\}$ and with $\max\{m_{ij},0\}$ arrows from $i$ to $j$ is the unique cluster quiver with $M(Q(M))=M$.
\end{rem}

\begin{defn}
\label{d:quiv-mutation}
Let $Q$ be a cluster quiver and let $k\in Q_0$ be a vertex. The \emph{mutation} of $Q$ at $k$ is the quiver $\mu_kQ$ obtained from $Q$ via the following procedure.
\begin{enumerate}
\item For each length $2$ path $i\longrightarrow k\longrightarrow j$, add an arrow $i\longrightarrow j$.
\item Reverse the direction of all arrows incident with $k$.
\item Choose a maximal set of $2$-cycles, and remove all arrows appearing in them.
\end{enumerate}
\end{defn}

\begin{eg}
We mutate the $4$-vertex quiver from the above list of examples at vertex $1$.
\[\begin{tikzcd}[column sep=17pt,row sep=15pt]
&1\arrow{dl}\arrow{dr}\\
2\arrow{dr}&&3\arrow{dl}\\
&4\arrow{uu}
\end{tikzcd}
\stackrel{\text{(1)}}{\Longrightarrow}
\begin{tikzcd}[column sep=17pt,row sep=15pt]
&1\arrow{dl}\arrow{dr}\\
2\arrow[shift right]{dr}&&3\arrow[shift left]{dl}\\
&4\arrow{uu}\arrow[shift right]{ul}\arrow[shift left]{ur}
\end{tikzcd}
\stackrel{\text{(2)}}{\Longrightarrow}
\begin{tikzcd}[column sep=17pt,row sep=15pt]
&1\arrow{dd}\\
2\arrow[shift right]{dr}\arrow{ur}&&3\arrow[shift left]{dl}\arrow{ul}\\
&4\arrow[shift right]{ul}\arrow[shift left]{ur}
\end{tikzcd}
\stackrel{\text{(3)}}{\Longrightarrow}
\begin{tikzcd}[column sep=17pt,row sep=15pt]
&1\arrow{dd}\\
2\arrow{ur}&&3\arrow{ul}\\
&4\end{tikzcd}\]
For readers wishing to experiment with further examples, we recommend Keller's Java applet \cite{Keller-Java}.
\end{eg}

It is straightforward to check that $\mu_k(\mu_kQ))=Q$, that is, mutating twice at the same vertex recovers the original quiver. Using this mutation operation, we can associate to each cluster quiver a cluster algebra, in the following way.

\begin{defn}
Let $\QQ(x_1,\dotsc,x_n)$ be the field of rational functions in variables $x_i$ for $i\in\{1,\dotsc,n\}$. A \emph{seed} consists of a cluster quiver $Q$ with vertex set $Q_0=\{1,\dotsc,n\}$, and a free generating set $\{f_1,\dotsc,f_n\}\subseteq\QQ(x_1,\dotsc,x_n)$ indexed by this set of vertices---being a free generating set means that the smallest subfield of $\QQ(x_1,\dotsc,x_n)$ containing $\QQ$ and the set $\{f_1,\dotsc,f_n\}$ is $\QQ(x_1,\dotsc,x_n)$ itself.

We can extend mutation to an operation on seeds. Given a seed $(Q,\{f_i\})$ and $k\in Q_0$, we define $\mu_k(Q,\{f_i\})=(\mu_kQ,\{f_i'\})$ where $\mu_kQ$ is the mutated quiver as in Definition~\ref{d:quiv-mutation}, and
\begin{equation}
\label{eq:cv-mutation}
f_i'=\begin{cases}f_i,&i\ne k,\\[2ex]\displaystyle\frac{1}{f_k}\Big(\prod_{k\to j}f_j+\prod_{\ell\to k}f_\ell\Big),&i=k.\end{cases}
\end{equation}
In the second case, the products are over arrows with tail, respectively head, $k$. These products can be over the empty set, in which case they evaluate to $1$. Note that mutating twice at the same vertex recovers the original seed.

Given a cluster quiver $Q$ with vertices $Q_0=\{1,\dotsc,n\}$, we consider the \emph{initial seed} $s_0=(Q,\{x_i\})$, whose functions are given by the distinguished generators of $\QQ(x_1,\dotsc,x_n)$. Let $\mathcal{S}_Q$ be the set of all seeds obtained from $s_0$ by a finite sequence of mutations. The \emph{cluster algebra} $\clustalg{Q}$ of $Q$ is the $\QQ$-subalgebra of $\QQ(x_1,\dotsc,x_n)$ generated by all functions appearing in all seeds in $\mathcal{S}_Q$.

Each function appearing in a seed in $\mathcal{S}_Q$ is called a \emph{cluster variable} of $\clustalg{Q}$, and the set $\{f_i\}$ of functions appearing in a single seed $(Q',\{f_i\})\in\mathcal{S}_Q$ is called a \emph{cluster} of $\clustalg{Q}$.
\end{defn}

Despite this somewhat esoteric definition, cluster algebras have made surprising appearances in a number of areas of mathematics, as we observed in the introduction. Notably, the coordinate rings of many important algebraic varieties, such as Grassmannians and other varieties of flags, are isomorphic to cluster algebras, at least after extending the definition slightly to replace $\QQ$ by a field extension (typically $\CC$) and choosing a set of frozen vertices in $Q_0$, mutations at which are not permitted when constructing the set $\mathcal{S}_Q$ of seeds---this means in particular that the variable $x_i$ appears in every cluster when $i$ is a frozen vertex.

The main conclusion of this section will be that for certain cluster algebras, the cluster variables give formulae for the entries of a frieze pattern in terms of the entries in a lightning bolt, as the following simple example demonstrates.

\begin{eg}
\label{eg:a2}
Let $Q$ be the quiver $1\longrightarrow 2$. Then we can compute that the seeds of $\clustalg{Q}$ are
\begin{center}
\resizebox{0.95\textwidth}{!}{$\begin{tikzcd}[column sep=-120pt,ampersand replacement=\&]
\&\left(1\longrightarrow 2,\left\{x_1,x_2\vphantom{\frac{1+x_2}{x_1}}\right\}\right)\arrow[leftrightarrow]{dr}\\[10pt]
\&\&\left(1\longleftarrow 2,\left\{\dfrac{1+x_2}{x_1},x_2\right\}\right)\arrow[leftrightarrow]{dd}\\[-10pt]
\left(1\longleftarrow 2,\left\{x_1,\dfrac{1+x_1}{x_2}\right\}\right)\arrow[leftrightarrow]{uur}\\[-10pt]
\&\&\left(1\longrightarrow 2,\left\{\dfrac{1+x_2}{x_1},\dfrac{1+x_1+x_2}{x_1x_2}\right\}\right)\arrow[leftrightarrow]{dl}\\[10pt]
\&\left(1\longrightarrow 2,\left\{\dfrac{1+x_1+x_2}{x_1x_2},\dfrac{1+x_1}{x_2}\right\}\right)\cong\left(1\longleftarrow 2,\left\{\dfrac{1+x_1}{x_2},\dfrac{1+x_1+x_2}{x_1x_2}\right\}\right)\arrow[leftrightarrow]{uul}
\end{tikzcd}$}
\end{center}
In this diagram, each two-headed arrow represents a mutation. The `isomorphism' on the final line indicates that the two seeds are related by relabelling the quiver vertices, and correspondingly re-ordering the cluster variables. (If we suppressed this labelling by writing each cluster variable directly on the corresponding quiver vertex, both seeds would consist of an arrow from $\frac{1+x_1+x_2}{x_1x_2}$ to $\frac{1+x_1}{x_2}$.) We see that there are five cluster variables, which are precisely the five Laurent polynomials that appeared in our experiment at the end of Section~\ref{s:friezes}.
\end{eg}

We now give two important theorems concerning cluster algebras in general, which will shed light on the phenomena concerning frieze patterns from Section~\ref{s:friezes}, at least once we have understood how frieze pattern entries are given by specialising cluster variables. The first is the \emph{Laurent phenomenon}, which will completely explain Phenomenon~\ref{ph:lightning-bolt}.

\begin{thm}
\label{t:laurent-phenomenon}
Let $Q$ be a cluster quiver. Then every cluster variable in $\clustalg{Q}$ is a Laurent polynomial in the initial variables $\{x_1,\dotsc,x_n\}$.
\end{thm}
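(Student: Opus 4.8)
The plan is to prove a more refined statement by induction on the number of mutations needed to reach a given seed from the initial seed $s_0$, following the strategy of Fomin--Zelevinsky. The key point is that the naive induction—``every cluster variable is a Laurent polynomial in the variables of any fixed cluster''—does not immediately go through, because after mutating the denominator $f_k$ that one divides by in \eqref{eq:cv-mutation} is itself only a Laurent polynomial, so a priori one could pick up unwanted denominators. The trick is to prove the stronger assertion that \emph{the whole cluster algebra $\clustalg{Q}$ is contained in the Laurent ring $\ZZ[x_1^{\pm1},\dotsc,x_n^{\pm1}]$}, and to get the induction to work by checking the base cases carefully.

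First I would set up the induction correctly. Fix the initial seed $s_0=(Q,\{x_i\})$. For a seed $s=(Q',\{f_i\})\in\mathcal{S}_Q$, I want to show each $f_i$ lies in $L:=\ZZ[x_1^{\pm1},\dotsc,x_n^{\pm1}]$. The case of zero mutations is immediate. For the inductive step, suppose $s'=\mu_k s$ and, by induction, every function in $s$ lies in $L$. Only the $k$th function changes, via $f_k'=f_k^{-1}(\prod_{k\to j}f_j + \prod_{\ell\to k}f_\ell)$. Writing $P:=\prod_{k\to j}f_j + \prod_{\ell\to k}f_\ell$, which lies in $L$ by induction, the issue is to show $f_k$ divides $P$ in $L$. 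The standard device is to also mutate at a \emph{neighbour} of $k$: one shows that $f_k$ and $f_k'$, together with the other unchanged variables, still form a cluster (this is exactly the involutivity $\mu_k\mu_k = \mathrm{id}$ on seeds), and then one runs a second induction comparing the expressions of $f_k'$ in two different clusters—one obtained by mutating at $k$, another by first mutating at an adjacent vertex $\ell$, then at $k$, then back at $\ell$. Because $f_k$ and the function coming from the $\ell$-mutation are \emph{coprime} in $L$ (they are distinct irreducible Laurent polynomials once one normalises, essentially because $x_k$ divides one but not the other), their product still divides $P$, and one concludes $f_k' \in L$.

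The main obstacle is precisely this coprimeness/divisibility argument: one must show that when two distinct cluster variables appear in the denominator after a composite mutation, they have no common factor in $L$, so that the single factor $f_k$ already divides the numerator $P$. This requires a little care about what ``irreducible'' and ``coprime'' mean in the Laurent ring $L$ (which is a localisation of a UFD, hence still a UFD, so the notions make sense), and about tracking which initial variables can occur in the denominators of the relevant cluster variables—one typically proves by a parallel induction that the cluster variable produced by mutating at $k$ has a denominator involving $x_k$ but none of its neighbours, which forces the needed coprimeness. I would cite Fomin--Zelevinsky~\cite{fomincluster1} for the full argument, since carrying out both nested inductions in detail is lengthy and somewhat orthogonal to the expository aims here, and simply sketch the ``detropicalisation'' idea: pass to the cluster $\mu_k s$, observe that $f_k'$ is by construction a Laurent polynomial in \emph{that} cluster's variables, and then use the two-neighbour comparison to bootstrap back to Laurentness in the original variables.
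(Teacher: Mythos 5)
Your proposal is correct and takes essentially the same route as the paper, which for this theorem likewise defers to Fomin--Zelevinsky's original combinatorial proof (and additionally mentions the Gross--Hacking--Keel geometric argument). Your sketch of the nested induction and the coprimality/divisibility argument in the Laurent ring $\ZZ[x_1^{\pm1},\dotsc,x_n^{\pm1}]$ is a fair summary of the Fomin--Zelevinsky strategy, and explicitly deferring the technical details (the Caterpillar Lemma and the coprimality of exchange polynomials) to their paper is exactly what the text itself does.
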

\begin{proof}
A combinatorial proof is given by Fomin and Zelevinsky \cite[Thm.~3.1]{FomZel-CA1} in the original paper defining cluster algebras (in more generality than here). A deeper geometric argument, which involves realising cluster variables as regular functions on a certain algebraic space defined as a union of tori indexed by the set of seeds, is given by Gross, Hacking and Keel \cite[Cor.~3.11]{grossbirational}.
\end{proof}

Since every cluster $\{f_i\}$ in $A_Q$ is a free generating set for $\QQ(x_1,\dotsc,x_n)$, any element of this field has a unique expression as a rational function in the $f_i$. By change of variables, it follows from Theorem~\ref{t:laurent-phenomenon} that the cluster variables of $A_Q$ are given by Laurent polynomials when written as rational functions in any of the clusters, not just the initial cluster $\{x_1,\dotsc,x_n\}$.

For most quivers $Q$, the set $\mathcal{S}_Q$ of seeds will be infinite. The second important theorem we mention here concerns when, as in Example~\ref{eg:a2}, the number of seeds (and therefore the number of cluster variables) is finite, in which case we say that the cluster algebra has finite type.

\begin{thm}[{\cite[Thm.~1.4]{FomZel-CA2}}]
\label{t:finite-type}
A cluster algebra $\clustalg{Q}$ has finite type if and only if $Q$ is related by a sequence of mutations to a quiver obtained by choosing an orientation of one of the following graphs.
\begingroup\allowdisplaybreaks\begin{align*}
\mathsf{A}_n&:
\begin{tikzcd}[row sep=13pt,ampersand replacement=\&]
1\arrow[no head]{r}\&2\arrow[no head]{r}\&\cdots\arrow[no head]{r}\&n-1\arrow[no head]{r}\&n
\end{tikzcd}\\[\smallskipamount]
\mathsf{D}_n&:
\begin{tikzcd}[row sep=13pt,ampersand replacement=\&]
1\arrow[no head]{r}\&2\arrow[no head]{r}\&\cdots\arrow[no head]{r}\&n-1\\
\&n\arrow[no head]{u}
\end{tikzcd}\\[\smallskipamount]
\mathsf{E}_6&:
\begin{tikzcd}[row sep=13pt,ampersand replacement=\&]
1\arrow[no head]{r}\&2\arrow[no head]{r}\&3\arrow[no head]{r}\&4\arrow[no head]{r}\&5\\
\&\&6\arrow[no head]{u}
\end{tikzcd}\\[\smallskipamount]
\mathsf{E}_7&:
\begin{tikzcd}[row sep=13pt,ampersand replacement=\&]
1\arrow[no head]{r}\&2\arrow[no head]{r}\&3\arrow[no head]{r}\&4\arrow[no head]{r}\&5\arrow[no head]{r}\&6\\
\&\&7\arrow[no head]{u}
\end{tikzcd}\\[\smallskipamount]
\mathsf{E}_8&:
\begin{tikzcd}[row sep=13pt,ampersand replacement=\&]
1\arrow[no head]{r}\&2\arrow[no head]{r}\&3\arrow[no head]{r}\&4\arrow[no head]{r}\&5\arrow[no head]{r}\&6\arrow[no head]{r}\&7\\
\&\&8\arrow[no head]{u}
\end{tikzcd}
\end{align*}\endgroup
\end{thm}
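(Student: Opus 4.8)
We content ourselves with an outline of the proof strategy, due to Fomin and Zelevinsky \cite{fomincluster2}; this result lies considerably deeper than Theorem~\ref{t:laurent-phenomenon}, and a full treatment would take us well beyond the scope of these notes. There are two implications. For the ``if'' direction we would show that a quiver mutation-equivalent to an orientation of a Dynkin diagram has only finitely many cluster variables, by parametrising them using the associated root system. For the ``only if'' direction we would show that finite type forces $Q$ into one of these mutation classes; this reduces to a combinatorial classification of the quivers whose \emph{entire} mutation class is suitably bounded, and is the harder part.

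For the ``if'' direction, fix an orientation $Q$ of a Dynkin diagram of type $\Phi\in\{\mathsf{A}_n,\mathsf{D}_n,\mathsf{E}_6,\mathsf{E}_7,\mathsf{E}_8\}$ and write $\Phi$ also for the associated root system, with simple roots $\alpha_1,\dotsc,\alpha_n$ and set of \emph{almost positive roots} $\Phi_{\geq-1}=\Phi_{>0}\cup\{-\alpha_1,\dotsc,-\alpha_n\}$. By Theorem~\ref{t:laurent-phenomenon}, each cluster variable $f$ of $\clustalg{Q}$, written as a reduced fraction in the initial cluster, has a well-defined \emph{denominator vector} $\mathbf{d}(f)\in\ZZ^n$ recording the exponents appearing in its denominator, with the convention $\mathbf{d}(x_i)=-\alpha_i$. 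The plan is to prove, by induction along mutation sequences, that $f\mapsto\mathbf{d}(f)$ is a bijection from the cluster variables of $\clustalg{Q}$ onto $\Phi_{\geq-1}$, and that each mutation $\mu_k$ acts on $\Phi_{\geq-1}$ by an explicit piecewise-linear involution. Since $\Phi_{\geq-1}$ is a finite set, this gives finitely many cluster variables and hence finitely many seeds. In type $\mathsf{A}_n$ the whole picture is transparent and meshes with the previous section---cluster variables correspond to diagonals of an $(n+3)$-gon and clusters to its triangulations, matching Theorem~\ref{t:class1}---and this is the case we shall actually use in relating cluster algebras to friezes. (We note in passing that finiteness for Dynkin $Q$ will also follow from the categorification of Section~\ref{s:clust-cat} together with Gabriel's theorem.)

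For the ``only if'' direction, the starting point is the rank $2$ case. If $Q$ is the quiver on two vertices joined by $b\geq2$ arrows, then mutation only reverses these arrows, so $b$ is never reduced, and one checks that the cluster variables $x_1,x_2,x_3=(1+x_2^b)/x_1,\dotsc$ obtained by mutating alternately at the two vertices are pairwise distinct; hence $\clustalg{Q}$ has infinite type. The next step is a lemma propagating this: if some quiver mutation-equivalent to $Q$ has a pair of vertices joined by two or more arrows, then $\clustalg{Q}$ already has infinitely many cluster variables. Calling $Q$ \emph{$2$-finite} if every quiver in its mutation class has at most one arrow between any two vertices, we would then have shown that finite type implies $2$-finite, and it remains to prove the combinatorial statement that a quiver is $2$-finite if and only if it is mutation-equivalent to an orientation of a Dynkin diagram. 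One direction is a finite (if laborious) check that Dynkin quivers are $2$-finite. For the other, one isolates a short explicit list of \emph{minimal} quivers that fail to be $2$-finite---essentially the suitably oriented affine Dynkin diagrams $\widetilde{\mathsf{A}}_n,\widetilde{\mathsf{D}}_n,\widetilde{\mathsf{E}}_n$ together with a handful of further exceptions---verifies for each that some mutation produces a double arrow, and then shows that every quiver not mutation-equivalent to a Dynkin quiver is mutation-equivalent to one realising such a minimal configuration.

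The main obstacle is this last combinatorial classification. It parallels Cartan's classification of Dynkin diagrams but is substantially more delicate, since one must control an entire mutation class rather than a single graph, and the exceptional types $\mathsf{E}_6,\mathsf{E}_7,\mathsf{E}_8$ and their affine relatives force some explicit, in practice computer-assisted, case checking. A secondary technical point is the propagation lemma: one cannot literally restrict a seed to two of its vertices to obtain a seed of a rank $2$ cluster algebra, so the required distinctness of cluster variables must be extracted differently, for example from unbounded growth of their denominator vectors under iterated mutation.
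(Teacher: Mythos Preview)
The paper does not give its own proof of this theorem: it is stated with a citation to Fomin--Zelevinsky \cite{fomincluster2} and no proof environment follows. What you have written is a faithful high-level outline of the argument in that cited reference (denominator vectors and almost positive roots for the ``if'' direction, the $2$-finite criterion and minimal infinite-type quivers for the ``only if'' direction), so in that sense your proposal is consistent with, and indeed goes beyond, what the paper itself provides. Your parenthetical remark that the Dynkin case also follows from the categorification of Section~\ref{s:clust-cat} together with Gabriel's theorem is the one thing the paper \emph{does} say later about proving this result, so you have anticipated that as well.
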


Some readers may recognise the graphs appearing in Theorem~\ref{t:finite-type} as the simply-laced Dynkin diagrams, and indeed we call their orientations \emph{Dynkin quivers}. These readers may be interested to know that the number of cluster variables in the cluster algebra of a Dynkin quiver is equal to the number of positive roots of the corresponding root system, plus $n$ (the number of vertices) \cite[Thm.~1.9]{FomZel-CA2}, and also that the non-simply-laced Dynkin diagrams can be made to appear by considering more general cluster algebras from weighted quivers (or skew-symmetrisable matrices, as in \cite{FomZel-CA2}). An introduction to these Lie-theoretic concepts can be found in the book of Fulton and Harris \cite{fultonrepresentation}.

We also remark that any two Dynkin quivers with the same underlying graph are related by a sequence of mutations (indeed, by mutations only at sinks and sources). Thus the cluster algebras associated to two different orientations of the same diagram are related by a change of variables, and so up to isomorphism there is one cluster algebra per Dynkin digram.

The final thing for us to observe in this section is that the the formulae expressing general entries of a height $n$ frieze in terms of the entries in a lightning bolt are given by cluster variables in a cluster algebra of type $\mathsf{A}_n$, i.e.\ the cluster algebra associated to a quiver whose underlying graph is this Dynkin diagram. Then Phenomenon~\ref{ph:lightning-bolt} will follow from the Laurent phenomenon in Theorem~\ref{t:laurent-phenomenon}. While we won't yet have an explanation for the periodicity in Phenomenon~\ref{ph:glide}, this being the topic of the next section, Theorem~\ref{t:finite-type} tells us that type $\mathsf{A}_n$ cluster algebras have only finitely many cluster variables, and so we will at least see that our frieze pattern can have only finitely many different entries.

\begin{thm}
\label{t:cluster-frieze}
Choose a lightning bolt in a frieze of height $n$. Then every entry of the frieze is obtained by taking a cluster variable of $\clustalg{Q}$, where $Q$ is an orientation of the diagram $\mathsf{A}_n$, and specialising the indeterminates $x_1,\dotsc,x_n$ to the values of the frieze in the lightning bolt.
\end{thm}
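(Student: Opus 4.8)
The plan is to realise the process of filling in the rest of the frieze from the chosen lightning bolt, via repeated use of the $\SL_2$ diamond rule, as a sequence of seed mutations in $\clustalg{Q}$ starting from the initial seed.

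First I would attach a quiver to the lightning bolt $L$. Label the $n$ non-trivial rows $1,\dots,n$ from top to bottom, write $\ell_i$ for the entry of $L$ in row $i$, and set $\ell_0=\ell_{n+1}=1$ for the entries of the two trivial rows through which $L$ passes. For $1\le i\le n-1$ the entries $\ell_i$ and $\ell_{i+1}$ lie in a common diamond, and in descending from row $i$ to row $i+1$ the bolt steps either to the left or to the right; let $Q=Q(L)$ be the orientation of the path $1-2-\cdots-n$ in which the edge between $i$ and $i+1$ points one way if this step is to the left and the other way if it is to the right. (A figure will fix the convention; for $n=2$ there is only one bolt up to translation and reflection, and it produces $\mathsf{A}_2$, whose two orientations are in any case related by relabelling.) I would then identify the initial seed $s_0=(Q,\{x_1,\dots,x_n\})$ with $L$ by setting $x_i=\ell_i$, noting that the empty products appearing in \eqref{eq:cv-mutation} at the two end vertices correspond precisely to the trivial-row values $\ell_0=\ell_{n+1}=1$.

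The core of the proof is an induction on the number of mutations, showing that every cluster variable of $\clustalg{Q}$, evaluated at $(x_1,\dots,x_n)=(\ell_1,\dots,\ell_n)$, is an entry of the frieze. To each seed reachable from $s_0$ I would attach a configuration of $n$ frieze entries, that of $s_0$ being $L$ itself. When $L$ zig-zags, so that every interior vertex of $Q$ is a sink or a source, the exchange relation \eqref{eq:cv-mutation} at a vertex $k$ reads $f_kf_k'=1+f_{k-1}f_{k+1}$ for an interior vertex and $f_kf_k'=1+f_{k\pm1}$ at an end, and this is exactly an instance of the diamond rule, with $f_k$ and $f_k'$ as one pair of opposite corners and the neighbouring entries---or a trivial-row $1$ at an end vertex---as the other pair; one then checks that the new configuration carries the quiver $\mu_kQ$, and a handful of figures showing the admissible local shapes makes this step routine. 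For a general bolt, $Q$ also has pass-through vertices, at which the relevant exchange relation is the Ptolemy-type generalisation of the diamond rule (itself a consequence of the diamond rule); the polygon model behind \Cref{t:class1}---clusters $\leftrightarrow$ triangulations of an $(n+3)$-gon, cluster variables $\leftrightarrow$ diagonals, frieze entries $\leftrightarrow$ diagonals and edges, and mutation $\leftrightarrow$ flip---gives the cleanest way to handle all the cases uniformly.

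It then remains to observe that every entry of the frieze occurs in some configuration reached from $L$, equivalently that every diagonal of the $(n+3)$-gon is eventually produced (the mutation graph being connected); by the induction each such entry is then the value at $(\ell_1,\dots,\ell_n)$ of a cluster variable of $\clustalg{Q}$, which is the assertion of the theorem. The main obstacle is the inductive step: isolating the right notion of ``frieze configuration attached to a seed'' and verifying in each local case that the diamond rule reproduces the exchange relation while correctly tracking the quiver mutation, especially for bolts that do not zig-zag, where one cannot simply re-route the bolt within a single row and the combinatorics is less symmetric. A secondary point is the behaviour at the trivial rows, which must be seen to be consistent with the empty products in \eqref{eq:cv-mutation}.
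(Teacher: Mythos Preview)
Your core idea---associate an $\mathsf{A}_n$ quiver to the lightning bolt and then realise the filling-in process as a sequence of seed mutations---is the same as the paper's. Where you diverge is in execution, and you have made the argument harder than it needs to be.

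The paper does not attempt to track a ``configuration'' for every reachable seed, nor does it ever mutate at a pass-through vertex. Instead it observes that, since $Q$ is acyclic, it always has a source $k$; mutating there uses only the diamond-rule exchange $x_k'=(1+x_{k-1}x_{k+1})/x_k$, and the new quiver $\mu_kQ$ is again the quiver of a lightning bolt---namely the one obtained from the old bolt by shifting its row-$k$ entry one step to the right. Iterating source mutations therefore sweeps out all frieze entries to the right of the original bolt, while sink mutations handle the left side. Every intermediate seed corresponds to an honest lightning bolt, every exchange is literally an instance of the $\SL_2$ rule, and no Ptolemy relations or polygon model are ever invoked.

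Your more ambitious plan---handle arbitrary mutations and general ``configurations''---can be made to work, but it imports nontrivial ingredients you have only gestured at: that the Ptolemy-type exchange at a pass-through vertex is a consequence of the diamond rule inside the frieze, and that the polygon model of \Cref{t:class1} correctly identifies seeds with triangulations, cluster variables with diagonals, and mutation with flips. These are true, but they are external to the statement being proved and are not established in the paper. Restricting to source/sink mutations, as the paper does, eliminates all of this: the inductive step is then a single diamond, the ``configuration'' is always a lightning bolt, and the fact that every entry is eventually reached is immediate from the knitting picture.
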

\begin{proof}
We sketch the argument. The first step is to construct a quiver $Q$ from the lightning bolt. The vertex set is $Q_0=\{1,\dotsc,n\}$ as usual, and we associate $i$ to the $i$-th row of the frieze (using the convention that the upper trivial row is row $0$, so that our quiver vertices correspond to the non-trivial rows). There is exactly one arrow $a_i$ between each pair $i,i+1$ of consecutive integers. We take $h(a_i)=i$ and $t(a_i)=i+1$ if the lightning bolt entry in row $i+1$ is to the left of that in row $i$, and $h(a)=i+1$, $t(a_i)=i$ otherwise. We then consider the initial seed attached to this quiver. As an example, for the lightning bolt given as an example in Section~\ref{s:friezes} we obtain the seed
\[\begin{tikzcd}[column sep=6pt,row sep=6pt]
x_1\arrow{dr}\\&x_2\\x_3\arrow{ur}\arrow{dr}\\&x_4\arrow{dr}\\&&x_5\arrow{dr}\\&&&x_6\end{tikzcd}\]

Because $Q$ does not have any oriented cycles, it must have at least one vertex, say $k$, which is a source. In particular, there are no length $2$ paths through $k$, and so mutating at this vertex only reverses the direction of the incident arrows, making $k$ into a sink. Similarly, when computing the new cluster variable $x_k'$ in the mutated seed via formula \eqref{eq:cv-mutation}, one of the two products is empty, and we obtain
\[x_k'=\frac{1+x_{k-1}x_{k+1}}{x_k},\]
adopting the convention that $x_0=x_{n+1}=1$. In other words, the configuration
\[\begin{array}{ccc}
&x_{k-1}&\\
x_k& &x_k'\\
&x_{k+1}&
\end{array}\]
satisfies the diamond rule of a frieze pattern, and so if we specialise the $x_k$ to the entries of the frieze along our chosen lightning bolt, $x_k'$ will be specialised to the frieze entry directly to the right of that in the $k$-th row of the lightning bolt. Moreover, the quiver $\mu_kQ$ of this new seed is exactly the one coming from the lightning bolt obtained from our initial one by replacing the entry in the $k$-th row by that to its right---the fact that $k$ is a source in $Q$ corresponds to the fact that this replacement yields another valid lightning bolt.

\[\begin{tikzcd}[column sep=6pt,row sep=6pt]
x_1\arrow{dr}\\&x_2\arrow{dr}\\x_3\arrow[dotted]{ur}\arrow[dotted]{dr}&&x_3'\\&x_4\arrow{ur}\arrow{dr}\\&&x_5\arrow{dr}\\&&&x_6\end{tikzcd}\]

By continuing to mutate at sources in this way, we see that all of the frieze entries to the right of the lightning bolt are specialisations of cluster variables in $\clustalg{Q}$.

\begin{center}
\resizebox{0.95\textwidth}{!}{$\begin{tikzcd}[column sep=8pt,row sep=8pt,ampersand replacement=\&]
x_1\arrow[dotted]{dr}\&\&x_1'\\\&x_2\arrow{ur}\arrow{dr}\\x_3\arrow[dotted]{ur}\arrow[dotted]{dr}\&\&x_3'\arrow{dr}\\\&x_4\arrow[dotted]{ur}\arrow[dotted]{dr}\&\&x_4'\\\&\&x_5\arrow{ur}\arrow{dr}\\\&\&\&x_6\end{tikzcd}
\enspace
\begin{tikzcd}[column sep=8pt,row sep=8pt,ampersand replacement=\&]
x_1\arrow[dotted]{dr}\&\&x_1'\arrow{dr}\\\&x_2\arrow[dotted]{ur}\arrow[dotted]{dr}\&\&x_2'\\x_3\arrow[dotted]{ur}\arrow[dotted]{dr}\&\&x_3'\arrow{dr}\arrow{ur}\\\&x_4\arrow[dotted]{ur}\arrow[dotted]{dr}\&\&x_4'\arrow{dr}\\\&\&x_5\arrow[dotted]{ur}\arrow[dotted]{dr}\&\&x_5'\\\&\&\&x_6\arrow{ur}\end{tikzcd}
\enspace
\begin{tikzcd}[column sep=8pt,row sep=8pt,ampersand replacement=\&]
x_1\arrow[dotted]{dr}\&\&x_1'\arrow[dotted]{dr}\&\&x_1''\\\&x_2\arrow[dotted]{ur}\arrow[dotted]{dr}\&\&x_2'\arrow{ur}\arrow{dr}\&\&\cdots\\x_3\arrow[dotted]{ur}\arrow[dotted]{dr}\&\&x_3'\arrow[dotted]{ur}\arrow[dotted]{dr}\&\&x_3''\\\&x_4\arrow[dotted]{ur}\arrow[dotted]{dr}\&\&x_4'\arrow{ur}\arrow{dr}\&\&\cdots\\\&\&x_5\arrow[dotted]{ur}\arrow[dotted]{dr}\&\&x_5'\arrow{dr}\\\&\&\&x_6\arrow[dotted]{ur}\&\&x_6'\end{tikzcd}$}
\end{center}

A similar argument, using mutations at sinks, gives the same result for the entries to the left of our lightning bolt.
\end{proof}

Note that we did not consider all possible mutations of our initial seed in the proof of Theorem~\ref{t:cluster-frieze}, and indeed we do not find all seeds of $\clustalg{Q}$ via mutations only at sources and sinks. On the other hand, it turns out, as we will see in the next section, that we do find all cluster variables of $\clustalg{Q}$ in this way. Indeed, the cluster algebra of type $\mathsf{A}_n$ has $\frac{1}{2}n(n+3)$ cluster variables, which is the number of elements of the fundamental domain appearing in Phenomenon~\ref{ph:glide}.

\section{Cluster categories}
\label{s:clust-cat}

In this section, we explain how representation theory can be used to study cluster algebras and, by extension, frieze patterns. This will require introducing a number of algebraic concepts relatively quickly: readers wishing to learn more about the representation theory of quivers (and other finite-dimensional algebras) are advised to consult books by Auslander--Reiten--Smalø \cite{auslanderrepresentationtheory}, Assem--Simson--Skowroński \cite{assemelements} or Schiffler \cite{schifflerquiver}, the latter also including a section on cluster-tilted algebras, whose original definition is related to the ideas presented in these notes. We will be particularly brief about the derived category, since we will need only to understand a very special example in any detail. A slightly less brief explanation, in a bit more generality, can be found in Appendix~\ref{s:bdcat}, and a reader looking for a detailed exposition of this extremely useful and powerful construction is advised to consult Keller's survey \cite{kellerderiveduses}, or the relevant sections of books by Happel \cite{Happel-Book} or Gelfand and Manin \cite{gelfandmethods}.

A \emph{category} $\mathcal{C}$ consists of a set of objects $\Ob(\mathcal{C})$, a set $\Hom_{\mathcal{C}}(X,Y)$ of morphisms $X\to Y$ for any pair of objects $X,Y\in\Ob(\mathcal{C})$, and an associative composition law consisting of maps $\circ\colon\Hom_{\mathcal{C}}(Y,Z)\times\Hom_{\mathcal{C}}(X,Y)\to\Hom_{\mathcal{C}}(X,Z)$ for each triple $X,Y,Z\in\Ob(\mathcal{C})$; we write $g\circ f=\circ(g,f)$. Each object $X$ has an identity morphism $1_X$, such that $1_X\circ f=f$ and $g\circ 1_X=g$ whenever these compositions are defined. All of our categories will be $\KK$-linear, for $\KK$ a fixed arbitrary field, meaning that the sets $\Hom_{\mathcal{C}}(X,Y)$ are $\KK$-vector spaces, and the composition maps are $\KK$-bilinear. We will, as is common, write $X\in\mathcal{C}$ as shorthand for $X\in\Ob(\mathcal{C})$.

Given categories $\mathcal{C}$ and $\mathcal{D}$, a \emph{functor} $F\colon\mathcal{C}\to\mathcal{D}$ consists of a map $F\colon\Ob(\mathcal{C})\to\Ob(\mathcal{D})$ and, for each pair of objects $X,Y\in\mathcal{C}$, a ($\KK$-linear) map $F\colon\Hom_{\mathcal{C}}(X,Y)\to\Hom_{\mathcal{D}}(FX,FY)$. These maps have the properties that $F(1_X)=1_{FX}$ for all $X\in\mathcal{C}$ and $F(g\circ f)=F(g)\circ F(f)$ for any composable morphisms $f$ and $g$ in $\mathcal{C}$. A functor $F\colon\mathcal{C}\to\mathcal{D}$ is an \emph{equivalence} if it admits an `inverse', a functor $G\colon\mathcal{D}\to\mathcal{C}$ such that both $F\circ G$ and $G\circ F$ are naturally isomorphic \cite[\S A.2]{assemelements} to identity functors (on $\mathcal{D}$ and $\mathcal{C}$ respectively). This means in particular that $G(F(X))\iso X$ for any object $X\in\mathcal{C}$, and $F(G(Y))\iso Y$ for any object $Y\in\mathcal{D}$. An equivalence $F\colon\mathcal{C}\to\mathcal{C}$ is called an \emph{autoequivalence} of $\mathcal{C}$.

The goal of this section will be to define, for any acyclic quiver $Q$, a category $\clustcat{Q}$, called the cluster category of $Q$, whose properties reflect those of the cluster algebra $\clustalg{Q}$. These categories were originally described by Buan, Marsh, Reineke, Reiten and Todorov \cite{BMRRT} (and by Caldero, Chapoton and Schiffler in some special cases \cite{calderoquivers1,calderoquivers2}). The upshot for us will be that, when $Q$ is an orientation of the graph $\mathsf{A}_n$, we can see a height $n$ frieze pattern as a function from the set $\indec{\clustcat{Q}}$ of isomorphism classes of indecomposable objects of $\clustcat{Q}$ to the positive integers. This will explain the periodicity in Phenomenon~\ref{ph:glide}, and tell us how to construct all height $n$ friezes, not only those arising from lightning bolts.

The construction uses the representation theory of the quiver $Q$. In the case that $Q$ has underlying graph $\mathsf{A}_n$, relevant to friezes, we will also give a different and more combinatorial explanation, that may be easier to follow for readers not already familiar with quiver representations.

\begin{defn}
\label{d:quiver-rep}
A representation $(V,f)$ (often abbreviated to just $V$) of a quiver $Q$ is an assignment of a $\KK$-vector space $V_i$ to each vertex $i\in Q_0$ and a linear map $f_a\colon V_{t(a)}\to V_{h(a)}$ to each arrow $a\in Q_1$. Given representations $(V,f)$ and $(W,g)$ of $Q$, a morphism $\varphi\colon(V,f)\to(W,g)$ consists of a linear map $\varphi_i\colon V_i\to W_i$ for each $i$ in $Q_0$, such that the diagram
\[\begin{tikzcd}
V_{t(a)}\arrow{r}{f_a}\arrow{d}[swap]{\varphi_{t(a)}}&V_{h(a)}\arrow{d}{\varphi_{h(a)}}\\
W_{t(a)}\arrow{r}[swap]{g_a}&W_{h(a)}
\end{tikzcd}\]
commutes for any $a\in Q_1$. The morphism $\varphi$ is an isomorphism if every $\varphi_i$ is an isomorphism of vector spaces.

We may define the direct sum of two representations pointwise, i.e.\ by $(V\oplus W)_i=V_i\oplus W_i$, with morphisms
\[\begin{pmatrix}f_a&0\\0&g_a\end{pmatrix}\colon V_{t(a)}\oplus W_{t(a)}\to V_{h(a)}\oplus W_{h(a)}.\]
A representation is \emph{indecomposable} if is non-zero (i.e.\ some $V_i$ is not the zero vector space) and not isomorphic to the direct sum of two non-zero representations.

In these notes, we restrict to finite-dimensional representations, meaning that the vector spaces attached to quiver vertices are all finite-dimensional. We denote by $\rep{Q}$ the category whose objects are such representations of $Q$, and whose morphisms are those described above. This category is \emph{abelian}: among other things, this means that it has direct sums as defined above, its morphism spaces are abelian groups (even $\KK$-vector spaces), its morphisms have well-defined kernels and cokernels, every injective morphism is a kernel, and every surjective morphism is a cokernel.
\end{defn}

\begin{rem}
\label{r:pathcat}
Readers familiar with category theory may recognise the commutative diagram in Definition~\ref{d:quiver-rep} as similar to that involved in the definition of a natural transformation of functors \cite[\S A.2]{assemelements}. Indeed, $Q$ determines a path category, with objects the quiver vertices, morphisms $i$ to $j$ given by the directed paths from $i$ to $j$, and composition given by concatenating paths. A representation of $Q$ is then nothing but a covariant functor from this path category to the category of $\KK$-vector spaces, and a morphism of representations is a natural transformation between two such functors.
\end{rem}

The next step towards our desired category $\clustcat{Q}$ is to construct the bounded derived category $\bdcat{Q}$ of $\rep{Q}$. This construction, first introduced by Verdier \cite{verdiercategories1}, can be made for any abelian category \cite{Verdier-These} (and even more general categories) but is somewhat complicated. As a result, we give an ad hoc definition in the case of quiver representations, exploiting special properties of the abelian category $\rep{Q}$ (most importantly that it is hereditary, meaning that every subobject of a projective object \cite[Def.~I.5.2]{assemelements} is again projective). The more general construction, and its relationship to the definition given below, is briefly described in Appendix~\ref{s:bdcat}.

\begin{defn}
\label{d:bdcat}
Given a quiver $Q$, the \emph{bounded derived category} $\bdcat{Q}$ of $Q$ is defined as follows. For each $i\in\ZZ$ and $V\in\rep{Q}$, we introduce the formal symbol $\Sigma^iV$, and take the objects of $\bdcat{Q}$ to be formal direct sums of these symbols. The morphism spaces
\[\Hom_{\bdcat{Q}}(\Sigma^iV,\Sigma^jW)\defeq\Ext^{j-i}_Q(V,W)\]
are given by extension groups \cite[\S A.4]{assemelements} in $\rep{Q}$; since $\rep{Q}$ is a hereditary abelian category, this vector space may only be non-zero when $j-i=0$ or $j-i=1$. This definition of morphism spaces can be extended to the formal direct sums via the formulae
\begin{align*}
\Hom_{\bdcat{Q}}(X_1\oplus X_2,Y)&=\Hom_{\bdcat{Q}}(X_1,Y)\oplus\Hom_{\bdcat{Q}}(X_2,Y),\\
\Hom_{\bdcat{Q}}(X,Y_1\oplus Y_2)&=\Hom_{\bdcat{Q}}(X,Y_1)\oplus\Hom_{\bdcat{Q}}(X,Y_2),
\end{align*}
and the composition law is defined using cup product of extensions.

The reason for writing $\Sigma^iV$ to encode the pair $(i,V)$ is that the category $\bdcat{Q}$ carries an autoequivalence $\Sigma\colon\bdcat{Q}\to\bdcat{Q}$, whose action on objects is exactly as suggested by the notation. On morphisms, $\Sigma$ acts as the identity, noting that
\[\Hom_{\bdcat{Q}}(\Sigma^{i+1}V,\Sigma^{j+1}W)=\Ext^{j-i}_Q(V,W)=\Hom_{\bdcat{Q}}(\Sigma^iV,\Sigma^jW).\]
\end{defn}

One can give $\bdcat{\mathcal{A}}$ the structure of a triangulated category, in which $\Sigma$ is the suspension functor, part of the defining data of this structure. We will not discuss the general definition or theory of triangulated categories, but refer the interested reader to Happel's book \cite{Happel-Book}.

When $Q$ is a Dynkin quiver, we can describe $\bdcat{Q}$ in very combinatorial terms. First we associate to $Q$ an infinite quiver $\ZZ Q$, as follows. The vertices of $\ZZ Q$ are pairs $(i,n)$ with $i\in Q_0$ and $n\in\ZZ$. The arrows of $\ZZ Q$ are $a_n\colon(t(a),n)\to(h(a),n)$ and $a_n^*\colon(h(a),n)\to(t(a),n+1)$ for each $a\in Q_1$ and $n\in\ZZ$. This quiver is highly symmetric---most useful to us is the translation symmetry $\tau$, defined by $\tau(i,n)=(i,n-1)$, $\tau(a_n)=a_{n-1}$ and $\tau(a_n^*)=a_{n-1}^*$.

Each vertex $(i,n)$ of $\ZZ Q$ gives rise to the \emph{mesh relation}
\[\sum_{a:h(a)=i}a_{n+1}a_n^* - \sum_{b:t(a)=i}b_n^*b_n,\]
a formal linear combination of paths in $\ZZ Q$ (which we read from right-to-left, like composition of functions). For example, if $Q$ is the quiver
\[\begin{tikzcd}[row sep=5pt]
&&1\\
4\arrow{r}{a}&3\arrow{dr}[swap]{c}\arrow{ur}{b}&\\
&&2\end{tikzcd}\]
then $\ZZ Q$ has the local configuration
\[\begin{tikzcd}&(1,n)\arrow{ddr}{b_n^*}\\
&(2,n)\arrow{dr}[swap]{c_n^*}\\
(3,n)\arrow{uur}{b_n}\arrow{ur}[swap]{c_n}\arrow{dr}[swap]{a_n^*}&&(3,n+1)\\
&(4,n+1)\arrow{ur}[swap]{a_{n+1}}
\end{tikzcd}\]
for each $n\in\ZZ$, and the mesh relation is $a_{n+1}a_n^*-b_n^*b_n-c_n^*c_n$.

Now we define a category $\mathcal{D}_Q$ whose objects are formal direct sums of vertices of $\ZZ Q$. The set $\Hom_{\mathcal{D}_Q}((i,n),(j,m))$ of morphisms from a vertex $(i,n)$ to a vertex $(j,m)$ is the vector space spanned by paths from $(i,n)$ to $(j,m)$, subject to the mesh relations: this means that a linear combination of paths obtained from a mesh relation by postcomposing all terms with a fixed path $p$ and precomposing all terms with a fixed path $q$ is $0$. For example, if we see the configuration
\[\begin{tikzcd}&&(1,\ell)\arrow{ddr}{b_\ell^*}\\
(i,n)\arrow[dashed]{dr}{p}&&(2,\ell)\arrow{dr}[swap]{c_\ell^*}&&(j,m)\\
&(3,\ell)\arrow{uur}{b_\ell}\arrow{ur}[swap]{c_\ell}\arrow{dr}[swap]{a_\ell^*}&&(3,\ell+1)\arrow[dashed]{ur}{q}\\
&&(4,\ell+1)\arrow{ur}[swap]{a_{\ell+1}}
\end{tikzcd}\]
for some paths $p$ and $q$, then the equation
\[qa_{\ell+1}a_\ell^*p-qb_\ell^*b_\ell p-qc_\ell^*c_\ell p=q(a_{\ell+1}a_\ell-b_\ell^*b_\ell-c_\ell^*c_\ell)p=0\]
holds in $\Hom_{\mathcal{D}_Q}((i,n),(j,m))$. We extend this definition of morphisms to all objects via the rules
\begin{align*}
\Hom_{\mathcal{D}_Q}(v_1\oplus v_2,w)&=\Hom_{\mathcal{D}_Q}(v_1,w)\oplus\Hom_{\mathcal{D}_Q}(v_2,w),\\
\Hom_{\mathcal{D}_Q}(v,w_1\oplus w_2)&=\Hom_{\mathcal{D}_Q}(v,w_1)\oplus\Hom_{\mathcal{D}_Q}(v,w_2).
\end{align*}
The symmetry $\tau$ of $\ZZ Q$ takes each mesh relation to another mesh relation, and so induces an autoequivalence of $\mathcal{D}_Q$.

\begin{thm}[{\cite[\S I.5.6]{Happel-Book}}]
\label{t:comb-dcat}
When $Q$ is a Dynkin quiver, there is an equivalence of categories $\mathcal{D}_Q\isoto\bdcat{Q}$.
\end{thm}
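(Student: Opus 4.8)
The plan is to identify the combinatorial category $\mathcal{D}_Q$ with the Auslander–Reiten quiver of $\bdcat{Q}$, using the fact (due to Happel) that for a Dynkin quiver this derived category has an especially rigid structure. The first step is to recall the structure of $\bdcat{Q}$ itself: since $\rep{Q}$ is hereditary, every object of $\bdcat{Q}$ is (isomorphic to) a direct sum of shifts $\Sigma^i V$ of indecomposable representations $V\in\rep{Q}$, and when $Q$ is Dynkin the indecomposables of $\rep{Q}$ are in bijection (by Gabriel's theorem) with the positive roots of the corresponding root system, hence finite in number. One then describes the Auslander–Reiten translate $\tau$ on $\bdcat{Q}$: on $\rep{Q}$ it is given by the usual AR-translation, but at the projective representations it ``wraps around'' to an injective shifted by $\Sigma^{-1}$, and iterating $\tau$ together with $\Sigma$ sweeps out all of $\bdcat{Q}$. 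The upshot is that the indecomposable objects of $\bdcat{Q}$ are naturally indexed by the vertices of $\ZZ Q$, with the translation symmetry $\tau$ of the quiver corresponding to the AR-translation of the same name.

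Next I would compare morphism spaces. The key structural input is that the Auslander–Reiten quiver of $\bdcat{Q}$ is, as a translation quiver, isomorphic to $\ZZ Q$; this is precisely the content of the computation in \cite[\S I.5.6]{happeltriangulated}, relying on the fact that for hereditary Dynkin $Q$ all AR-triangles are built from meshes and the category has no ``extra'' morphisms beyond those forced by compositions of irreducible ones. Concretely, one shows that between any two indecomposables of $\bdcat{Q}$ the Hom-space is spanned by paths in the AR-quiver modulo the mesh relations coming from the almost split triangles — this is a general feature of representation-finite situations, where the radical of the category is generated by irreducible morphisms and the only relations are the mesh relations. Since $\mathcal{D}_Q$ was defined to have exactly this morphism structure on the vertices of $\ZZ Q$, sending a vertex of $\ZZ Q$ to the corresponding indecomposable of $\bdcat{Q}$, and extending additively over direct sums, gives a functor $\mathcal{D}_Q\to\bdcat{Q}$. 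It is essentially surjective because every object of $\bdcat{Q}$ decomposes into indecomposables, each hit by some vertex; it is full and faithful because on each pair of indecomposables it is precisely the identification of two descriptions of the same Hom-space, and both sides respect direct sums by construction. Finally one checks the functor intertwines the two autoequivalences called $\tau$, so the combinatorial translation matches the AR-translation — but this is not needed merely for the stated equivalence.

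The main obstacle is the morphism-space comparison: verifying that in $\bdcat{Q}$ the composites of irreducible morphisms span all Hom-spaces and that the \emph{only} relations among them are the mesh relations. This is where one genuinely uses that $Q$ is Dynkin (equivalently, that $\rep{Q}$ is representation-finite and hereditary): in a representation-finite hereditary category the covering theory of Bongartz–Gabriel, or directly Happel's explicit analysis of the derived category, shows the AR-quiver together with its mesh relations presents the whole category. For the $\mathsf{A}_n$ case relevant to friezes one could alternatively give a hands-on check, drawing the AR-quiver of $\bdcat{Q}$ explicitly as a strip and matching it against $\ZZ Q$, which is the combinatorial picture that will reappear when friezes are realised as functions on $\ind\clustcat{Q}$. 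Since the paper only needs a sketch, I would state the identification of AR-quivers as the crux, cite \cite[\S I.5.6]{happeltriangulated} for the verification that mesh relations suffice, and leave the additive bookkeeping to the reader.
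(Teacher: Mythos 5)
The paper offers no proof of this theorem beyond the citation to Happel, and your sketch is a faithful reconstruction of exactly the argument in that reference: identify the AR-quiver of $\bdcat{Q}$ with $\ZZ Q$, show Hom-spaces are spanned by paths modulo mesh relations (the genuinely Dynkin-dependent step, which you correctly isolate as the crux), and extend additively. The only point worth adding is the orientation convention the paper flags in the remark immediately following the theorem — the natural functor actually identifies $\mathcal{D}_{Q^{\op}}$ with $\bdcat{Q}$ via $(i,0)\mapsto \Sigma^0P_i$, and one uses that $\mathcal{D}_Q\simeq\mathcal{D}_{Q^{\op}}$ for Dynkin $Q$ to get the statement as written — but this is a bookkeeping issue, not a gap.
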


This statement is perhaps a bit misleading for readers already familiar with representations of quivers. Indeed, the most natural equivalence of categories is $\mathcal{D}_{Q^{\op}}\isoto\bdcat{Q}$, where $Q^{\op}$ is the opposite quiver of $Q$, obtained by reversing the directions of all the arrows, and takes the object $(i,0)\in\mathcal{D}_Q$ to $\Sigma^0P_i\in\bdcat{Q}$, where $P_i$ is the indecomposable projective representation at vertex $i$. However, by drawing some examples (or looking at the $\mathsf{A}_6$ case shown below) the reader will quickly convince themselves that $\mathcal{D}_Q$ is independent of the orientation of $Q$, up to equivalence of categories, and so the theorem as stated is also true. In our application, it will be enough to know that we can substitute $\bdcat{Q}$ for $\mathcal{D}_Q$ when $Q$ is a quiver of type $\mathsf{A}_n$, and the details of the functor providing the equivalence will not be relevant.

The following picture shows $\mathcal{D}_Q$ in the case that $Q$ is the orientation of $\mathsf{A}_6$ used to illustrate the proof of Theorem~\ref{t:cluster-frieze}.
\begin{center}
\resizebox{0.95\textwidth}{!}{\begin{tikzpicture}[scale=0.75]
\foreach \x in {1,2,3,4,5,6,7,8,9,10,11}
   \foreach \y in {2,4,6}
      \draw (2*\x,\y) node ({\x}c{\y}) {$\bullet$};
\foreach \x in {1,2,3,4,5,6,7,8,9,10}
   \foreach \y in {1,3,5}
      \draw (2*\x+1,\y) node ({\x}c{\y}) {$\bullet$};
\foreach \x in {0,11}
   \foreach \y in {1,3,5}
      \draw (2*\x+1,\y) node {$\cdots$};
\foreach \x in {1,2,3,4,5,6,7,8,9,10}
   \foreach \y/\z in {2/1,4/3,6/5}
      \draw[->] ({\x}c{\y})--({\x}c{\z});
\foreach \x in {1,2,3,4,5,6,7,8,9,10}
   \foreach \y/\z in {2/3,4/5}
      \draw[->] ({\x}c{\y})--({\x}c{\z});
\foreach \x/\w in {1/2,2/3,3/4,4/5,5/6,6/7,7/8,8/9,9/10,10/11}
   \foreach \y/\z in {1/2,3/4,5/6}
      \draw[->] ({\x}c{\y})--({\w}c{\z});
\foreach \x/\w in {1/2,2/3,3/4,4/5,5/6,6/7,7/8,8/9,9/10,10/11}
   \foreach \y/\z in {3/2,5/4}
      \draw[->] ({\x}c{\y})--({\w}c{\z});
\foreach \x/\y/\z/\w in {2/6/2/5,2/4/2/5,2/4/2/3,2/3/3/2,3/2/3/1}
\draw[->, very thick] ({\x}c{\y}) -- ({\z}c{\w});
\end{tikzpicture}}
\end{center}
In this picture, each point $\bullet$ represents an indecomposable object in $\mathcal{D}_Q$, i.e.\ one of the points $(i,n)$, and each arrow represents an irreducible morphism, a non-isomorphism between indecomposable objects which is not expressible as a product of such morphisms. The arrows between the vertices $(i,0)$ are shown in bold, making the initial quiver $Q$ visible. Comparing to Section~\ref{s:friezes}, we see that it is natural to think of a height $n$ frieze pattern (excluding the trivial rows) as a function on the indecomposable objects of $\mathcal{D}_Q$ (or equivalently those of $\bdcat{Q}$, by Theorem~\ref{t:comb-dcat}) for any orientation $Q$ of the Dynkin diagram $\mathsf{A}_n$.

The mesh relations in $\mathcal{D}_Q$ say that each square
\[\begin{tikzpicture}[scale=0.8]
\foreach \x/\y in {0/1,1/2,1/0,2/1}
   \draw (\x,\y) node ({\x}c{\y}) {$\bullet$};
\foreach \x/\y/\z/\w in {0/1/1/2,0/1/1/0,1/2/2/1,1/0/2/1}
   \draw[->] ({\x}c{\y}) -- ({\z}c{\w});
\end{tikzpicture}\]
either commutes or anti-commutes, whereas at the bottom of the diagram, a pair of morphisms
\[\begin{tikzpicture}[scale=0.8]
\foreach \x/\y in {0/1,1/2,2/1}
   \draw (\x,\y) node ({\x}c{\y}) {$\bullet$};
\foreach \x/\y/\z/\w in {0/1/1/2,1/2/2/1}
   \draw[->] ({\x}c{\y}) -- ({\z}c{\w});
\end{tikzpicture}\]
composes to zero (and similarly at the top). In type $\mathsf{A}_n$, it is not too difficult to compute the space of morphisms between any pair of objects using these rules, as in the following example.
\begin{center}
\resizebox{0.95\textwidth}{!}{\begin{tikzpicture}[scale=0.75]
\foreach \x in {1,2,3,5,6,7,8,9,10,11}
   \foreach \y in {2,4,6}
      \draw (2*\x,\y) node ({\x}c{\y}) {$\bullet$};
\draw (8,2) node ({4}c{2}) {$\bullet$};
\draw (8,4) node ({4}c{4}) {$\circ$};
\draw (8,6) node ({4}c{6}) {$\bullet$};
\foreach \x in {1,2,3,4,5,6,7,8,9,10}
   \foreach \y in {1,3,5}
      \draw (2*\x+1,\y) node ({\x}c{\y}) {$\bullet$};
\foreach \x in {0,11}
   \foreach \y in {1,3,5}
      \draw (2*\x+1,\y) node {$\cdots$};
\foreach \x in {1,2,3,4,5,6,7,8,9,10}
   \foreach \y/\z in {2/1,4/3,6/5}
      \draw[->] ({\x}c{\y})--({\x}c{\z});
\foreach \x in {1,2,3,4,5,6,7,8,9,10}
   \foreach \y/\z in {2/3,4/5}
      \draw[->] ({\x}c{\y})--({\x}c{\z});
\foreach \x/\w in {1/2,2/3,3/4,4/5,5/6,6/7,7/8,8/9,9/10,10/11}
   \foreach \y/\z in {1/2,3/4,5/6}
      \draw[->] ({\x}c{\y})--({\w}c{\z});
\foreach \x/\w in {1/2,2/3,3/4,4/5,5/6,6/7,7/8,8/9,9/10,10/11}
   \foreach \y/\z in {3/2,5/4}
      \draw[->] ({\x}c{\y})--({\w}c{\z});
\draw[rounded corners,thick]
(7.6,4) -- (10,6.4) -- (13.4,3) -- (11,0.6) -- cycle;
\end{tikzpicture}}
\end{center}
The rectangle shows those indecomposable objects having a non-zero morphism from the fixed object indicated by $\circ$, at the left-hand corner of the rectangle. Moreover, the space of morphisms from $\circ$ to each of these objects is $1$-dimensional, spanned by any path from $\circ$ to the given object---any two such paths determine the same morphism because of the mesh relations. In general the morphisms starting at a given indecomposable object $X\in\mathcal{D}_Q$, for $Q$ of type $\mathsf{A}_n$, can be computed by drawing a maximal rectangle with $X$ at its left-hand corner: note that when $X$ is on the upper or lower edge of the figure, this rectangle will degenerate to a line.

The autoequivalence $\tau$ acts by translating the picture one step to the left, whereas $\Sigma$ acts by a glide reflection to the right, with fundamental domain as shown.
\begin{center}
\resizebox{0.95\textwidth}{!}{\begin{tikzpicture}[scale=0.75]
\foreach \x in {1,2,3,4,5,6,7,8,9,10,11}
   \foreach \y in {2,4,6}
      \draw (2*\x,\y) node ({\x}c{\y}) {$\bullet$};
\foreach \x in {1,2,3,4,5,6,7,8,9,10}
   \foreach \y in {1,3,5}
      \draw (2*\x+1,\y) node ({\x}c{\y}) {$\bullet$};
\foreach \x in {0,11}
   \foreach \y in {1,3,5}
      \draw (2*\x+1,\y) node {$\cdots$};
\foreach \x in {1,2,3,4,5,6,7,8,9,10}
   \foreach \y/\z in {2/1,4/3,6/5}
      \draw[->] ({\x}c{\y})--({\x}c{\z});
\foreach \x in {1,2,3,4,5,6,7,8,9,10}
   \foreach \y/\z in {2/3,4/5}
      \draw[->] ({\x}c{\y})--({\x}c{\z});
\foreach \x/\w in {1/2,2/3,3/4,4/5,5/6,6/7,7/8,8/9,9/10,10/11}
   \foreach \y/\z in {1/2,3/4,5/6}
      \draw[->] ({\x}c{\y})--({\w}c{\z});
\foreach \x/\w in {1/2,2/3,3/4,4/5,5/6,6/7,7/8,8/9,9/10,10/11}
   \foreach \y/\z in {3/2,5/4}
      \draw[->] ({\x}c{\y})--({\w}c{\z});
\draw[rounded corners,thick]
(2.65,6.3) -- (15.2,6.3) -- (9.5,0.6) -- (8.35,0.6) -- cycle;
\end{tikzpicture}}
\end{center}

A consequence of Theorem~\ref{t:comb-dcat} is that the autoequivalence $\tau$ of $\mathcal{D}_Q$ can be seen as an autoequivalence of $\bdcat{Q}$. In fact, this coincides with an autoequivalence of $\bdcat{Q}$ defined intrinsically for any acyclic quiver $Q$, not just the Dynkin quivers---it is closely related to a functor on $\rep{Q}$ called the Auslander--Reiten translation, also typically denoted by $\tau$. (For readers already familiar with this functor on $\rep{Q}$; in $\bdcat{Q}$, the equivalence $\tau$ takes $\Sigma^nV$ to $\Sigma^n(\tau V)$ when $V$ is indecomposable and not projective, and takes $\Sigma^nP_i$ to $\Sigma^{n-1}I_i$, where $P_i$, respectively $I_i$, denotes the indecomposable projective, respectively injective, representation at vertex $i$.)

The two autoequivalences $\Sigma$ and $\tau$ of $\bdcat{Q}$ even commute with each other: $\Sigma\circ\tau=\tau\circ\Sigma$. We are finally ready to define the category $\clustcat{Q}$.

\begin{defn}[{\cite[\S1]{BMRRT}}]
Given an acyclic quiver $Q$, its \emph{cluster category} $\clustcat{Q}$ is the orbit category
\[\clustcat{Q}=\bdcat{Q}/(\Sigma^{-1}\circ\tau)\]
for the action of $\Sigma^{-1}\circ\tau$ on $\bdcat{Q}$. By definition, this has the same objects as $\bdcat{Q}$, but the morphism spaces are
\begin{equation}
\label{eq:orb-cat-morphisms}
\Hom_{\clustcat{Q}}(X,Y)=\bigoplus_{n\in\ZZ}\Hom_{\bdcat{Q}}(X,(\Sigma^{-1}\circ\tau)^nY).
\end{equation}
For each $X,Y\in\bdcat{Q}$, only finitely many of the vector spaces appearing on the right-hand side of \eqref{eq:orb-cat-morphisms} are non-zero, and so the morphism spaces in $\clustcat{Q}$ are still finite-dimensional vector spaces. The composition
\[\circ\colon\Hom_{\clustcat{Q}}(Y,Z)\times\Hom_{\clustcat{Q}}(X,Y)\to\Hom_{\clustcat{Q}}(X,Z)\]
has components given by the maps
\begin{align*}
\Hom_{\bdcat{Q}}(Y,(\Sigma^{-1}\circ\tau)^nZ){\times}\Hom_{\bdcat{Q}}(X,(\Sigma^{-1}\circ\tau)^m&Y)\\
&\to\Hom_{\bdcat{Q}}(X,(\Sigma^{-1}\circ\tau)^{n+m}Z),\\
(g,f)&\mapsto(\Sigma^{-1}\circ\tau)^m(g)\circ f,
\end{align*}
noting that
\[(\Sigma^{-1}\circ\tau)^m\colon\Hom_{\bdcat{Q}}(Y,(\Sigma^{-1}\circ\tau)^nZ)\to\Hom_{\bdcat{Q}}((\Sigma^{-1}\circ\tau)^mY,(\Sigma^{-1}\circ\tau)^{n+m}Z).\]
Like $\bdcat{Q}$, the cluster category $\clustcat{Q}$ is a triangulated category, by a result of Keller \cite{Keller-Orbit}.
\end{defn}
While $\clustcat{Q}$ may in some sense appear much `bigger' than $\bdcat{Q}$---it has the same set of objects as $\bdcat{Q}$, and more morphisms between any two---in practical terms it is actually `smaller', as indecomposable objects which are non-isomorphic in $\bdcat{Q}$ can become isomorphic in $\clustcat{Q}$. Indeed, any two objects of $\bdcat{Q}$ in the same $(\Sigma^{-1}\circ\tau)$-orbit are isomorphic in $\clustcat{Q}$.  Both of the autoequivalences $\tau$ and $\Sigma$ of $\bdcat{Q}$ descend to autoequivalences of $\clustcat{Q}$, and indeed on $\clustcat{Q}$ they coincide (because $\Sigma^{-1}\circ\tau=\id{\clustcat{Q}}$ by the orbit category construction).

Returning to our $\mathsf{A}_6$ example, we see that a fundamental domain for $\Sigma^{-1}\circ\tau$ is as shown (cf.\ Phenomenon~\ref{ph:glide}).
\begin{center}
\resizebox{0.95\textwidth}{!}{\begin{tikzpicture}[scale=0.75]
\foreach \x in {1,2,3,4,5,6,7,8,9,10,11}
   \foreach \y in {2,4,6}
      \draw (2*\x,\y) node ({\x}c{\y}) {$\bullet$};
\foreach \x in {1,2,3,4,5,6,7,8,9,10}
   \foreach \y in {1,3,5}
      \draw (2*\x+1,\y) node ({\x}c{\y}) {$\bullet$};
\foreach \x in {0,11}
   \foreach \y in {1,3,5}
      \draw (2*\x+1,\y) node {$\cdots$};
\foreach \x in {1,2,3,4,5,6,7,8,9,10}
   \foreach \y/\z in {2/1,4/3,6/5}
      \draw[->] ({\x}c{\y})--({\x}c{\z});
\foreach \x in {1,2,3,4,5,6,7,8,9,10}
   \foreach \y/\z in {2/3,4/5}
      \draw[->] ({\x}c{\y})--({\x}c{\z});
\foreach \x/\w in {1/2,2/3,3/4,4/5,5/6,6/7,7/8,8/9,9/10,10/11}
   \foreach \y/\z in {1/2,3/4,5/6}
      \draw[->] ({\x}c{\y})--({\w}c{\z});
\foreach \x/\w in {1/2,2/3,3/4,4/5,5/6,6/7,7/8,8/9,9/10,10/11}
   \foreach \y/\z in {3/2,5/4}
      \draw[->] ({\x}c{\y})--({\w}c{\z});
\draw[rounded corners,thick]
(2.65,6.3) -- (17.2,6.3) -- (11.5,0.6) -- (8.35,0.6) -- cycle;
\end{tikzpicture}}
\end{center}
Thus the isomorphism classes of indecomposable objects in $\clustcat{Q}$ are in bijection with the points inside this fundamental domain. Note that there are morphisms from objects at the right-hand end of this domain to those at the left-hand end, as the arrows crossing these two ends of the domain are identified---indeed one can think of $\clustcat{Q}$ as being drawn on a Möbius band, obtained as a quotient of the strip on which we draw $\mathcal{D}_Q$. If we think of frieze patterns as functions on the indecomposable objects of $\mathcal{D}_Q$, Phenomenon~\ref{ph:glide} is the observation that these functions descend to $\clustcat{Q}$, or in other words that they are constant on $(\Sigma^{-1}\circ\tau)$-orbits.

As the name suggests, the cluster category $\clustcat{Q}$ can be used to study the cluster algebra $\clustalg{Q}$: it is a categorification of this cluster algebra. More precisely, it is an additive categorification---cluster algebras may also have monoidal categorifications \cite{hernandezclusteralgebras,KKKO}, which have a rather different flavour more in common with categorifications in Lie theory, whereby the cluster algebra is realised as the Grothendieck ring of a monoidal category. The most important feature of $\clustcat{Q}$ for us is that certain isomorphism classes of objects in $\clustcat{Q}$ are in bijection with the cluster variables of $\clustalg{Q}$, whereas others are in bijection with the clusters.

\begin{defn}
We say objects $X,Y\in\clustcat{Q}$ are \emph{compatible} if $\Hom_{\clustcat{Q}}(X,\Sigma Y)=0$. An object $X\in\clustcat{Q}$ is \emph{rigid} if it is compatible with itself.

We write $\add{X}$ for the set of objects of $\clustcat{Q}$ isomorphic to direct sums of direct summands of $X$, and say that $X$ is \emph{cluster-tilting} if $\add{X}$ is precisely the set of objects compatible with $X$. Cluster-tilting objects are sometimes referred to by the longer but more descriptive name \emph{maximal $1$-orthogonal} \cite{Iyama-HART}.
\end{defn}

\begin{rem}
A consequence of the Auslander--Reiten formula for finite-dimensional algebras is that
\[\Hom_{\clustcat{Q}}(X,\Sigma Y)=\Hom_{\clustcat{Q}}(Y,\tau X)^*=\Hom_{\clustcat{Q}}(Y,\Sigma X)^*,\]
where $(-)^*$ denotes duality for $\KK$-vector spaces, so the relation of being compatible is symmetric. However, it is not reflexive, since not every object is rigid (although at least every indecomposable object is rigid when $Q$ is a Dynkin quiver), and nor is it transitive. The above formula concerning $\Hom$-spaces in $\clustcat{Q}$ expresses that $\clustcat{Q}$ is \emph{$2$-Calabi--Yau} as a triangulated category. For more appearances and uses of Calabi--Yau triangulated categories, see Keller's survey \cite{Keller-CY}.

In $\clustcat{Q}$, but not in all $2$-Calabi--Yau triangulated categories, cluster-tilting objects are precisely the maximal rigid objects, i.e.\ those rigid objects $X$ for which $X\oplus Y$ is rigid only if $Y\in\add{X}$.
\end{rem}

\begin{thm}[{\cite{calderotriangulated2}, \cite[Thm.~A.1]{BIRSc}}]
\label{t:clust-bij}
A choice of cluster-tilting object $T=\bigoplus_{i=1}^nT_i\in\clustcat{Q}$, with a decomposition into indecomposable summands $T_i$, induces a bijection $X\mapsto\varphi_X$ between the indecomposable rigid objects of $\clustcat{Q}$ and the cluster variables of $\clustalg{Q}$. Under this bijection, compatible pairs of indecomposable rigid objects are sent to cluster variables that appear together in the same cluster. In particular, there is an induced bijection between the cluster-tilting objects of $\clustcat{Q}$ and the clusters of $\clustalg{Q}$. We have $\varphi_{T_i}=x_i$, and hence the bijection sends the chosen cluster-tilting object $T$ to the initial cluster $\{x_1,\dotsc,x_n\}$.

For $Q$ of type $\mathsf{A}_n$, the cluster variables corresponding to indecomposables in a mesh satisfy the $\SL_2$ diamond rule. That is, for each configuration
\[\begin{tikzpicture}
\draw (0,1) node ({0}c{1}) {$A$};
\draw (1,2) node ({1}c{2}) {$B$};
\draw (1,0) node ({1}c{0}) {$C$};
\draw (2,1) node ({2}c{1}) {$D$};
\foreach \x/\y/\z/\w in {0/1/1/2,0/1/1/0,1/2/2/1,1/0/2/1}
   \draw[->] ({\x}c{\y}) -- ({\z}c{\w});
\end{tikzpicture}\]
the corresponding cluster variables satisfy $\varphi_A\varphi_D-\varphi_B\varphi_C=1$. At the boundary meshes, either $B$ or $C$ will be missing, and we take $\varphi_B=1$ or $\varphi_C=1$ as appropriate in the preceding formula.
\end{thm}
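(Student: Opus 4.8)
The plan is to realise the bijection through the \emph{Caldero--Chapoton map} (cluster character) associated with $T$: a map $\varphi\colon\Ob(\clustcat{Q})\to\QQ(x_1,\dotsc,x_n)$ which, for $T=\bigdsum_{i=1}^n\Sigma^0 P_i$, can be written explicitly as a weighted sum of Euler characteristics of the quiver Grassmannians of an object, but of which the argument uses only three formal properties: (a) $\varphi_{T_i}=x_i$; (b) multiplicativity, $\varphi_{X\dsum Y}=\varphi_X\varphi_Y$ (so in particular $\varphi_0=1$); and (c) the \emph{cluster multiplication formula}: whenever $X,Y$ are indecomposable with $\dim_\KK\Hom_{\clustcat{Q}}(X,\Sigma Y)=1$, the two associated exchange triangles $Y\to E\to X\to\Sigma Y$ and $X\to E'\to Y\to\Sigma X$ satisfy $\varphi_X\varphi_Y=\varphi_E+\varphi_{E'}$. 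Property (c) is the technical heart of the statement, and I expect it to be the main obstacle; in these notes it would be quoted from Caldero--Chapoton and Caldero--Keller (with an axiomatic treatment due to Palu) rather than reproved.

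Granting (a)--(c), the next step is to show that $\varphi$ carries cluster-tilting objects of $\clustcat{Q}$ to clusters of $\clustalg{Q}$, bijectively, by induction along mutations from $T$. Two structural inputs about cluster-tilting objects in a $2$-Calabi--Yau triangulated category are used. First, each cluster-tilting object $T'=\bigdsum_i T'_i$ admits, for every index $k$, a unique indecomposable $T_k^\ast\not\iso T'_k$ for which $\mu_k T':=T_k^\ast\dsum\bigdsum_{i\ne k}T'_i$ is again cluster-tilting, together with exchange triangles $T'_k\to B\to T_k^\ast\to\Sigma T'_k$ and $T_k^\ast\to B'\to T'_k\to\Sigma T_k^\ast$ whose middle terms are $B=\bigdsum_{k\to j}T'_j$ and $B'=\bigdsum_{\ell\to k}T'_\ell$, the arrows being those of the Gabriel quiver of $\End_{\clustcat{Q}}(T')^{\op}$. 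Second, the graph of cluster-tilting objects under these mutations is connected. Feeding the exchange triangles into (b) and (c) gives $\varphi_{T'_k}\varphi_{T_k^\ast}=\prod_{k\to j}\varphi_{T'_j}+\prod_{\ell\to k}\varphi_{T'_\ell}$, precisely the numerator of the seed mutation rule~\eqref{eq:cv-mutation}; together with the fact that the Gabriel quiver of $\End_{\clustcat{Q}}(\mu_k T')^{\op}$ is the quiver mutation at $k$ of that of $\End_{\clustcat{Q}}(T')^{\op}$ (the mutation theorem for cluster-tilted algebras), this shows that $\varphi$ intertwines cluster-tilting mutation with seed mutation. Since $\varphi(T)=\{x_1,\dotsc,x_n\}$ is the initial cluster by (a) and every seed arises from $s_0$ by mutations, $\varphi$ carries cluster-tilting objects to clusters and, by connectedness, every cluster arises this way; as distinct cluster-tilting objects have distinct sets of indecomposable summands and $\varphi$ is injective on indecomposable rigid objects (by a cardinality count in finite type---both sides having $\frac{1}{2}n(n+3)$ elements in type $\mathsf{A}_n$---and in general by linear independence of the cluster monomials), this correspondence is a bijection.

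The remaining claims follow formally. Every indecomposable rigid object of $\clustcat{Q}$ is a direct summand of some cluster-tilting object, since it extends to a maximal rigid object and maximal rigid objects of $\clustcat{Q}$ are cluster-tilting; as every cluster variable lies in some cluster, the bijection on cluster-tilting objects restricts to the asserted bijection $X\mapsto\varphi_X$ between indecomposable rigid objects and cluster variables, with $\varphi_{T_i}=x_i$ as in (a). If $X$ and $Y$ are compatible then $X\dsum Y$ is rigid---compatibility is symmetric by $2$-Calabi--Yau duality---hence a summand of a cluster-tilting object, so $\varphi_X$ and $\varphi_Y$ occur together in a cluster; conversely, two cluster variables sharing a cluster pull back to summands of a common cluster-tilting object, hence to compatible objects.

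Finally, for the $\SL_2$ diamond rule in type $\mathsf{A}_n$: a mesh with source $A$, sink $D$ and middle term(s) $B$ and $C$ is the Auslander--Reiten triangle $A\to B\dsum C\to D\to\Sigma A$, in which $A\iso\tau D$. Here $\dim_\KK\Hom_{\clustcat{Q}}(D,\Sigma\tau D)=1$, since by $2$-Calabi--Yau duality $\Hom_{\clustcat{Q}}(D,\Sigma\tau D)\iso\Hom_{\clustcat{Q}}(\tau D,\Sigma D)^\ast$, which equals $\End_{\clustcat{Q}}(\tau D)^\ast\iso\KK$ using $\Sigma\iso\tau$ on $\clustcat{Q}$ and the fact that every indecomposable of a Dynkin cluster category is a brick. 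Thus the Auslander--Reiten triangle is one of the two exchange triangles for the pair $(D,\tau D)$; the other is $D\to 0\to\tau D\isoto\Sigma D$, as $\tau D\iso\Sigma D$ forces its middle term to vanish. Properties (b) and (c) now give $\varphi_D\varphi_{\tau D}=\varphi_{B\dsum C}+\varphi_0=\varphi_B\varphi_C+1$, i.e.\ $\varphi_A\varphi_D-\varphi_B\varphi_C=1$ with $A=\tau D$; at a boundary mesh, where $C$ (say) is absent, the Auslander--Reiten triangle reads $A\to B\to D\to\Sigma A$ and the same computation yields the formula under the convention $\varphi_C=1$.
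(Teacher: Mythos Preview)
Your proposal is correct and follows precisely the approach the paper indicates: the map $X\mapsto\varphi_X$ is the Caldero--Chapoton cluster character, and the bijection and diamond rule are deduced from its multiplication formula applied to exchange (in particular Auslander--Reiten) triangles. Note, however, that the paper does not actually give a proof of this theorem---it is stated with citations to Caldero--Keller and Buan--Marsh--Reiten--Todorov, and the only commentary is the remark that $\varphi_X$ is defined via the Caldero--Chapoton formula, with a pointer to Plamondon's survey; your sketch is therefore considerably more detailed than anything appearing in the paper itself.
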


The map $X\mapsto\varphi_X$ appearing in Theorem~\ref{t:clust-bij} is obtained by restricting to rigid objects the assignment of a Laurent polynomial $\varphi_X$ to any object $X\in\clustcat{Q}$ via an explicit formula, known as the Caldero--Chapoton formula \cite{CalCha}; unfortunately, the ingredients in this formula require more representation theory than there is space to discuss here. The interested reader may consult the original paper \cite{CalCha}, and a survey by Plamondon \cite{plamondonclustercharacters}.

Combining Theorem~\ref{t:clust-bij} with a classical result of representation theory---Gabriel's theorem \cite{gabrielunzerlegbare1}---gives an alternative proof of Theorem~\ref{t:finite-type}. Precisely, Theorem~\ref{t:clust-bij} shows that $\clustalg{Q}$ has only finitely many cluster variables if and only if $\clustcat{Q}$ has only finitely many indecomposable objects. By construction, it is equivalent to ask that $\rep{Q}$ has finitely many indecomposable objects (since $\clustcat{Q}$ has $n=|Q_0|$ more such objects than $\rep{Q}$), and Gabriel's theorem states that this happens if and only if $Q$ is an orientation of a simply-laced Dynkin diagram.

Theorems~\ref{t:cluster-frieze} and \ref{t:clust-bij} show that we can consider a frieze pattern of height $n$ (or at least its non-trivial rows) to be a function on the set $\indec{\clustcat{Q}}$ of indecomposable objects of the cluster category $\clustcat{Q}$ for $Q$ an $\mathsf{A}_n$-quiver (all of which are rigid). Indeed, each frieze entry is the specialisation of a cluster variable, related by Theorem~\ref{t:clust-bij} to an element of $\indec{\clustcat{Q}}$. This connection between cluster categories of type $\mathsf{A}_n$ and frieze patterns was first observed by Caldero and Chapoton \cite[\S5]{CalCha}.

Our diagrams in Section~\ref{s:friezes} draw frieze patterns as functions on the set of isoclasses of indecomposable objects of the derived category $\bdcat{Q}$, but since indecomposable objects of $\bdcat{Q}$ in the same $(\Sigma^{-1}\circ\tau)$-orbit are isomorphic indecomposable objects in $\clustcat{Q}$, and hence correspond to the same cluster variable, a frieze must be constant on each of these orbits. Since $\Sigma^{-1}\circ\tau$ acts by a glide reflection, we have an explanation for Phenomenon~\ref{ph:glide}.

Our final result is a second classification of height $n$ friezes, in terms of cluster-tilting objects.

\begin{thm}
\label{t:class2}
Let $Q$ be any $\mathsf{A}_n$ quiver. Then the friezes of height $n$ are in bijection with the cluster-tilting objects of the cluster category $\clustcat{Q}$. Indeed, given a cluster tilting object $T$, there is a unique frieze pattern taking the value $1$ on each indecomposable summand of $T$, when interpreted as a function on $\indec{\clustcat{Q}}$.
\end{thm}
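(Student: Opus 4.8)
The plan is to build, for each cluster-tilting object $T$, an explicit height $n$ frieze $F_T$ out of the cluster variables of $\clustalg{Q}$, to show $F_T$ is the unique frieze equal to $1$ on the indecomposable summands of $T$, and to identify $T\mapsto F_T$ with a composite of two standard bijections passing through triangulated polygons. Throughout I would use Theorem~\ref{t:comb-dcat} and the orbit construction of $\clustcat{Q}$ to view $\ind{\clustcat{Q}}$ as the set of vertices of one fundamental domain of the $(\Sigma^{-1}\circ\tau)$-action pictured in Section~\ref{s:clust-cat}, so that a height $n$ frieze (with its two trivial rows removed) is precisely a positive-integer-valued function on $\ind{\clustcat{Q}}$ obeying the diamond rule, the absent entry of a boundary mesh being read as $1$.

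\emph{Existence.} Fix $T=\bigoplus_{i=1}^nT_i$ and apply Theorem~\ref{t:clust-bij} with $T$ as the chosen cluster-tilting object to get the bijection $X\mapsto\varphi_X$ with $\varphi_{T_i}=x_i$, and set $F_T(X)=\varphi_X(1,\dotsc,1)$. Then $F_T(X)\in\ZZ$ since $\varphi_X$ is a Laurent polynomial by the Laurent phenomenon (Theorem~\ref{t:laurent-phenomenon}, which holds relative to any cluster); $F_T$ satisfies the $\SL_2$ diamond rule, boundary meshes included, by the mesh part of Theorem~\ref{t:clust-bij}; and $F_T(T_i)=1$. The only substantive point is that the entries are \emph{positive}: this is the positivity of cluster variables, which is classical in type $\mathsf{A}_n$ and is also visible from the Caldero--Chapoton formula, whose value at $(1,\dotsc,1)$ is a sum of nonnegative Euler characteristics of quiver Grassmannians with the two extreme terms already contributing $1$. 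Hence $F_T$ is a height $n$ frieze with $F_T(T_i)=1$ for all $i$.

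\emph{Uniqueness and bijectivity.} I would route these through Section~\ref{s:friezes}. Theorem~\ref{t:class1}, in the vertex-numbered form of Remark~\ref{r:class1}, gives a bijection between height $n$ friezes (as functions on $\ind{\clustcat{Q}}$) and triangulations of an $(n+3)$-gon; and the classical polygon model for the type $\mathsf{A}_n$ cluster algebra, combined with Theorem~\ref{t:clust-bij}, matches the indecomposable objects of $\clustcat{Q}$ with the diagonals of the polygon and the cluster-tilting objects with the triangulations (since $n$ pairwise compatible indecomposables are the same as $n$ pairwise non-crossing diagonals, which is the same as a triangulation). The bridge is the claim that, under frieze $\leftrightarrow$ triangulation $\mathcal{T}$, the frieze equals $1$ at a diagonal $d$ exactly when $d\in\mathcal{T}$. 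Granting this: from $F_T(T_i)=1$ and the fact that the $n$ diagonals attached to $T_1,\dotsc,T_n$ already form a triangulation, the triangulation of $F_T$ is the one given by $T$; and if $G$ is any frieze with $G(T_i)=1$ for all $i$, then its triangulation likewise contains, hence equals, that of $T$, so $G$ and $F_T$ have the same triangulation and therefore agree. This proves uniqueness and exhibits $T\mapsto F_T$ as the composite of the two bijections above, hence as a bijection.

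\emph{The main obstacle.} It is the value characterization just used. One inclusion is immediate, since $\varphi_{T_i}(1,\dotsc,1)=1$. For the other---that no indecomposable outside $\add{T}$ takes the value $1$---one can either quote the classical result of Conway--Coxeter (and Broline--Crowe--Isaacs), which identifies the frieze entry at a diagonal with a count of matchings of a triangulated polygon that is strictly greater than $1$ whenever the diagonal is not in the triangulation, or argue that for $X\notin\add{T}$ the cluster variable $\varphi_X$ is not a monomial, so its value at $(1,\dotsc,1)$ is a sum of at least two positive integers and hence at least $2$. Once this combinatorial (equivalently, positivity) input is in place, the remaining steps are just bookkeeping with results already available.
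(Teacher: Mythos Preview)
Your proposal is correct and aligns with the paper's route through the Conway--Coxeter classification. The paper's own proof is merely a pointer---it says the result ``can be obtained by `direct calculation', using earlier results classifying or counting frieze patterns, for example those of Conway and Coxeter''---and does not carry out any of the explicit steps (the construction of $F_T$, the positivity input, or the identification with the composite bijection through triangulations) that you supply.
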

\begin{proof}
While this result is well-known, see e.g.\ \cite[Rem.~5.6]{baurfriezes}, we are not able to find a convenient reference or direct proof in the literature. However, the result can be obtained by `direct calculation', using earlier results classifying or counting frieze patterns, for example those of Conway and Coxeter \cite{conwaytriangulated1,conwaytriangulated2}.
\end{proof}

\begin{rem}
Given any Dynkin quiver $Q$, not necessarily of type $\mathsf{A}_n$, one can extend the definition of frieze pattern to obtain functions $\indec{\clustcat{Q}}\to\ZZ_{>0}$ satisfying a condition generalising the $\SL_2$ diamond rule. However, in this larger generality, the analogue of Theorem~\ref{t:class2} is not true---there exist frieze patterns in this sense which do not take the value $1$ on all indecomposable summands of any cluster-tilting object. An example, and a discussion of these frieze patterns for $Q$ of type $\mathsf{D}_n$, can be found in a paper of Fontaine and Plamondon \cite{fontainecounting}.
\end{rem}

Recall from Remark~\ref{r:class1} that height $n$ friezes, thought of as functions on $\mathcal{D}_Q$ for $Q$ of type $\mathsf{A}_n$, are classified up to the glide reflection $\Sigma^{-1}\circ\tau$ by triangulations of labelled polygons. Comparing to Theorem~\ref{t:class2}, we see that there is a bijection between triangulations of a labelled $(n+3)$-sided polygon and the cluster-tilting objects of $\clustcat{Q}$ for $Q$ of type $\mathsf{A}_n$---indeed, this connection was used by Caldero, Chapoton and Schiffler \cite{calderoquivers1} to give a geometric description of $\clustcat{Q}$ in these cases. This bijection can be made explicit, and there is a combinatorial way of computing the endomorphism algebra of a cluster-tilting object $T\in\clustcat{Q}$ from the corresponding triangulation. These endomorphism algebras are called \emph{cluster-tilted algebras}, and have many interesting properties \cite{BIRSm}, which are discussed further in Schiffler's book \cite{schifflerquiver}.

\begin{eg}
The cluster category of type $\mathsf{A}_6$ has a cluster-tilting object whose indecomposable summands are indicated by the white vertices in the following figure.
\begin{center}
\resizebox{0.95\textwidth}{!}{\begin{tikzpicture}[scale=0.75]
\foreach \x in {1,2,3,5,6,7,9,10,11}
      \draw (2*\x,2) node ({\x}c{2}) {$\bullet$};
\foreach \x in {1,2,4,5,6,7,9,10,11}
      \draw (2*\x,4) node ({\x}c{4}) {$\bullet$};
\foreach \x in {1,2,4,5,6,7,8,10,11}
      \draw (2*\x,6) node ({\x}c{6}) {$\bullet$};
\foreach \x in {1,2,3,5,6,8,9,10}
      \draw (2*\x+1,1) node ({\x}c{1}) {$\bullet$};
\foreach \x in {1,2,4,5,6,8,9,10}
      \draw (2*\x+1,3) node ({\x}c{3}) {$\bullet$};
\foreach \x in {1,2,4,5,6,7,9,10}
      \draw (2*\x+1,5) node ({\x}c{5}) {$\bullet$};
\foreach \x/\y in {4/1,3/3,3/5,8/5,7/3,7/1}
   \draw (2*\x+1,\y) node ({\x}c{\y}) {$\circ$};
\foreach \x/\y in {4/2,3/4,3/6,9/6,8/4,8/2}
   \draw (2*\x,\y) node ({\x}c{\y}) {$\circ$};
\foreach \x in {0,11}
   \foreach \y in {1,3,5}
      \draw (2*\x+1,\y) node {$\cdots$};
\foreach \x in {1,2,3,4,5,6,7,8,9,10}
   \foreach \y/\z in {2/1,4/3,6/5}
      \draw[->] ({\x}c{\y})--({\x}c{\z});
\foreach \x in {1,2,3,4,5,6,7,8,9,10}
   \foreach \y/\z in {2/3,4/5}
      \draw[->] ({\x}c{\y})--({\x}c{\z});
\foreach \x/\w in {1/2,2/3,3/4,4/5,5/6,6/7,7/8,8/9,9/10,10/11}
   \foreach \y/\z in {1/2,3/4,5/6}
      \draw[->] ({\x}c{\y})--({\w}c{\z});
\foreach \x/\w in {1/2,2/3,3/4,4/5,5/6,6/7,7/8,8/9,9/10,10/11}
   \foreach \y/\z in {3/2,5/4}
      \draw[->] ({\x}c{\y})--({\w}c{\z});
\draw[rounded corners,thick]
(2.65,6.3) -- (17.2,6.3) -- (11.5,0.6) -- (8.35,0.6) -- cycle;
\end{tikzpicture}}
\end{center}
This cluster-tilting object corresponds to our first example of a frieze pattern---observe that it has the same `shape' as the lightning bolt used to construct that frieze.

The cluster category of type $\mathsf{A}_3$ has a cluster-tilting object indicated by white vertices as shown.
\[
\begin{tikzpicture}[scale=0.8]
\foreach \x in {1,3,5,7}
      \draw (2*\x+1,1) node ({\x}c{1}) {$\bullet$};
\foreach \x in {1,2,3,4,5,6,7,8}
      \draw (2*\x,2) node ({\x}c{2}) {$\bullet$};
\foreach \x in {2,4,6}
      \draw (2*\x+1,3) node ({\x}c{3}) {$\bullet$};
\foreach \x in {2,4,6}
   \draw (2*\x+1,1) node ({\x}c{1}) {$\circ$};
\foreach \x in {1,3,5,7}
   \draw (2*\x+1,3) node ({\x}c{3}) {$\circ$};
\foreach \x in {0,8}
   \foreach \y in {1,3}
      \draw (2*\x+1,\y) node {$\cdots$};
\foreach \x in {1,2,3,4,5,6,7}
      \draw[->] ({\x}c{2})--({\x}c{1});
\foreach \x in {1,2,3,4,5,6,7}
      \draw[->] ({\x}c{2})--({\x}c{3});
\foreach \x/\y in {1/2,2/3,3/4,4/5,5/6,6/7,7/8}
      \draw[->] ({\x}c{1})--({\y}c{2});
\foreach \x/\y in {1/2,2/3,3/4,4/5,5/6,6/7,7/8}
      \draw[->] ({\x}c{3})--({\y}c{2});
\draw[rounded corners,thick]
(1.65,3.3) -- (10.2,3.3) -- (7.5,0.6) -- (4.35,0.6) -- cycle;
\end{tikzpicture}
\]
This cluster-tilting object corresponds to the frieze pattern with no lightning bolt observed in Example~\ref{eg:no-bolt}. On the level of the cluster algebra, it corresponds to a seed whose quiver has an oriented cycle (and indeed consists entirely of such a cycle, of length $3$).
\end{eg}

Just as clusters can be mutated, replacing a single cluster variable by a new one, so can cluster-tilting objects, by an operation replacing a single indecomposable summand by a non-isomorphic one. Given what we have already seen, this operation could simply be defined using the bijections of Theorem~\ref{t:clust-bij}. However, mutation of cluster-tilting objects can also be defined intrinsically \cite{BMRRT}---indeed Iyama and Yoshino \cite{IyaYos} show that this is a general phenomenon of $2$-Calabi--Yau triangulated categories---and one can then check that the bijection between clusters and cluster-tilting objects from Theorem~\ref{t:clust-bij} relates the combinatorial and categorical notions of mutation. Under the bijection with triangulations of the $(n+3)$-gon, when $Q$ has type $\mathsf{A}_n$, mutations correspond to flips: each diagonal separates two triangles whose union is a quadrilateral, and using the other diagonal in this quadrilateral gives a new triangulation.
\[
\mathord{\begin{tikzpicture}[baseline=0]
\draw (-1,1) -- (1,1) -- (1,-1) -- (-1,-1) -- (-1,1) -- (1,-1);
\end{tikzpicture}}
\quad\stackrel{\text{flip}}{\longleftrightarrow}\quad
\mathord{\begin{tikzpicture}[xscale=-1,baseline=0]
\draw (-1,1) -- (1,1) -- (1,-1) -- (-1,-1) -- (-1,1) -- (1,-1);
\end{tikzpicture}}
\]

The connection between cluster algebras or categories and triangulations can be greatly generalised \cite{FST1}. Let $S$ be an oriented surface with boundary, and let $\mathbb{M}\subset\partial S$ be a finite subset of points in the boundary of $S$, such that each boundary component contains at least one point in $\mathbb{M}$. A triangulation of $(S,\mathbb{M})$ consists of a maximal collection of pairwise non-crossing arcs in $S$ with endpoints in $\mathbb{M}$. Excluding a small number of degenerate cases, any such triangulation determines a cluster-tilting object in a (generalised \cite{Amiot-ClustCat}) cluster category, and mutating this cluster-tilting object corresponds to flipping diagonals in the triangulation. The combinatorial rule for computing endomorphism algebras extends to this generality as well \cite{labardiniquivers}.

\appendix
\section{The bounded derived category}
\label{s:bdcat}
In this appendix, we will give a more common and more general description of the bounded derived category than that given in Definition~\ref{d:bdcat} in the case of quiver representations.

To start with, we let $\mathcal{A}$ be any abelian category---our main example is the abelian category $\rep{Q}$, but other examples include categories of representations over more general rings or algebras, or the category of abelian groups. We can then consider complexes of objects of $\mathcal{A}$, i.e.\ diagrams
\[\begin{tikzcd}V^\bullet\colon\cdots\arrow{r}&V^{i-1}\arrow{r}{d^{i-1}}&V^i\arrow{r}{d^i}&V^{i+1}\arrow{r}&\cdots\end{tikzcd}\]
consisting of objects $V^i\in\mathcal{A}$ for each $i\in\ZZ$ and morphisms $d^i\colon V^i\to V^{i+1}$ between these objects, with the property that $d^{i}\circ d^{i-1}=0$ for all $i$. A morphism of complexes is, similar to a morphism of quiver representations, a commutative diagram
\[\begin{tikzcd}
V^\bullet\arrow[shift right=11pt]{d}[swap]{\varphi}\colon\cdots\arrow{r}&V^{i-1}\arrow{r}{d^{i-1}}\arrow{d}[swap]{\varphi^{i-1}}&V^i\arrow{r}{d^i}\arrow{d}[swap]{\varphi^i}&V^{i+1}\arrow{r}\arrow{d}[swap]{\varphi^{i+1}}&\cdots\\
W^\bullet\colon\cdots\arrow{r}&W^{i-1}\arrow{r}[swap]{\delta^{i-1}}&W^i\arrow{r}[swap]{\delta^i}\arrow{r}&W^{i+1}\arrow{r}&\cdots
\end{tikzcd}\]

Given a complex $V^\bullet$ of objects of $\mathcal{A}$, we can compute its cohomology groups
\[\cohom{i}{V^\bullet}=\ker(d^i)/\im(d^{i-1}),\]
which are themselves objects of $\mathcal{A}$, and a morphism $\varphi\colon V^\bullet\to W^\bullet$ induces a morphism $\bar{\varphi}^i\colon\cohom{i}{V^{\bullet}}\to\cohom{i}{W^{\bullet}}$ in $\mathcal{A}$ for each $i$. We call $\varphi$ a \emph{quasi-isomorphism} if all of the maps $\bar{\varphi}^i$ are isomorphisms; note that this does \emph{not} imply the existence of a map $\psi\colon W^\bullet\to V^\bullet$ of complexes such that $\bar{\psi}^i=(\bar{\varphi}^i)^{-1}$.

\begin{defn}
Let $\mathcal{A}$ be an abelian category. The \emph{bounded derived category} $\bdcat{\mathcal{A}}$ of $\mathcal{A}$ has as objects complexes $V^\bullet$ of objects in $\mathcal{A}$ such that $\cohom{i}{V^\bullet}=0$ for $i\gg0$ and $i\ll0$. The morphism sets in $\bdcat{\mathcal{A}}$ are obtained from morphisms of complexes by formally adjoining inverses of all quasi-isomorphisms.\footnote{This language is rather sloppy, but will do for our purposes. The reader interested in more details should look up `localisation of categories', which is a similar construction to localisation of rings. In this language, we define $\bdcat{\mathcal{A}}$ by taking the category of bounded complexes over $\mathcal{A}$, which has the same objects as $\bdcat{\mathcal{A}}$ but morphisms given simply by morphisms of complexes, and then localising this category in the set of quasi-isomorphisms.} A little more concretely, this means that a morphism $V^\bullet\to W^\bullet$ in $\bdcat{\mathcal{A}}$ can be represented (non-uniquely) by a finite sequence
\[\begin{tikzcd}[row sep=small, column sep=small]
&X_1^\bullet\arrow{dl}[swap]{\sim}\arrow{dr}&&X_2^\bullet\arrow{dl}[swap]{\sim}\arrow{dr}&&\cdots\arrow{dl}[swap]{\sim}\arrow{dr}&&X_n^\bullet\arrow{dl}[swap]{\sim}\arrow{dr}\\
V^\bullet&&Y_1^\bullet&&Y_2^\bullet&&Y_{n-1}^\bullet&&W^\bullet
\end{tikzcd}\]
in which the rightward-pointing arrows are arbitrary maps of complexes, and the leftward-pointing arrows are quasi-isomorphisms.
\end{defn}

As mentioned in Section~\ref{s:clust-cat} for the special case $\mathcal{A}=\rep{Q}$, we may equip $\bdcat{\mathcal{A}}$ with the structure of a triangulated category. Part of this structure is the autoequivalence $\Sigma\colon\bdcat{\mathcal{A}}\isoto\bdcat{\mathcal{A}}$, which shifts the degrees of the objects and morphisms in a complex in the following way.
\[\begin{tikzcd}[row sep=5pt]
V^\bullet\colon\cdots\arrow{r}&V^{i-1}\arrow{r}{d^{i-1}}&V^i\arrow{r}{d^i}&V^{i+1}\arrow{r}&\cdots\\
\Sigma V^\bullet\colon\cdots\arrow{r}&V^i\arrow{r}{-d^i}&V^{i+1}\arrow{r}{-d^{i+1}}&V^{i+2}\arrow{r}&\cdots
\end{tikzcd}\]
The category $\bdcat{\mathcal{A}}$ also has direct sums, given by taking the direct sum of complexes term by term.

For a general abelian category $\mathcal{A}$, understanding $\bdcat{\mathcal{A}}$ can be rather difficult---indeed, from the description given above it is not clear that it is a category at all, for the rather frightening reason that the morphisms between a pair of objects, which consist of sequences of morphisms of complexes interlaced with formal inverses of quasi-isomorphisms, need not a priori form a well-defined set, although it turns out that they do. In some special cases (most notably the case $\mathcal{A}=\rep{Q})$, simpler descriptions are available. First, we give a general simplification of the description of the morphisms in $\bdcat{\mathcal{A}}$.

\begin{defn}
We say a morphism $f\colon V^\bullet\to W^\bullet$ of complexes is \emph{null-homotopic} if there are morphisms $h^i\colon V^i\to W^{i-1}$ for each $i\in\ZZ$, such that $\varphi^i=\delta^{i-1}h^i+h^{i+1}d^i$, as in the following figure.
\[\begin{tikzcd}[row sep=large, column sep=large]
V^\bullet\arrow[shift right=11pt]{d}[swap]{\varphi}\colon\cdots\arrow{r}&V^{i-1}\arrow{r}{d^{i-1}}\arrow{d}[swap]{\varphi^{i-1}}&V^i\arrow{r}{d^i}\arrow{d}[swap]{\varphi^i}\arrow{dl}[swap]{h^i}&V^{i+1}\arrow{r}\arrow{d}[swap]{\varphi^{i+1}}\arrow{dl}[swap]{h^{i+1}}&\cdots\\
W^\bullet\colon\cdots\arrow{r}&W^{i-1}\arrow{r}[swap]{\delta^{i-1}}&W^i\arrow{r}[swap]{\delta^i}\arrow{r}&W^{i+1}\arrow{r}&\cdots
\end{tikzcd}\]
Let $\bhcat{\mathcal{A}}$ denote the category with the same objects as $\bdcat{\mathcal{A}}$, and with the morphism space $\Hom_{\bhcat{\mathcal{A}}}(V^\bullet,W^\bullet)$ given by the space of maps of complexes $V^\bullet\to W^\bullet$, modulo the subspace of null-homotopic maps.
\end{defn}

\begin{prop}
\label{p:homotopy}
The bounded derived category $\bdcat{\mathcal{A}}$ is equivalent to the category obtained from $\bhcat{\mathcal{A}}$ by formally inverting (the classes of) quasi-isomorphisms.
\end{prop}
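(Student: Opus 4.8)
The plan is to realise $\bdcat{\mathcal{A}}$ as a localisation of $\bhcat{\mathcal{A}}$ and compare it, purely by universal properties, with the localisation named in the statement; the only substantive input needed is that passing from chain maps to morphisms in $\bdcat{\mathcal{A}}$ annihilates null-homotopic maps. Write $\mathrm{Ch}^{\mathrm{b}}(\mathcal{A})$ for the category of bounded complexes over $\mathcal{A}$ with ordinary chain maps as morphisms, let $p\colon\mathrm{Ch}^{\mathrm{b}}(\mathcal{A})\to\bhcat{\mathcal{A}}$ be the evident quotient functor, and recall that $\bdcat{\mathcal{A}}$ is, by its very definition, the localisation $\mathrm{Ch}^{\mathrm{b}}(\mathcal{A})[W^{-1}]$ of $\mathrm{Ch}^{\mathrm{b}}(\mathcal{A})$ at the class $W$ of quasi-isomorphisms, with localisation functor $Q\colon\mathrm{Ch}^{\mathrm{b}}(\mathcal{A})\to\bdcat{\mathcal{A}}$. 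Let $\overline{W}=p(W)$ be the induced class of quasi-isomorphisms in $\bhcat{\mathcal{A}}$; the goal is then to produce an equivalence $\bdcat{\mathcal{A}}\isoto\bhcat{\mathcal{A}}[\overline{W}^{-1}]$.

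The crucial step is to check that $Q$ sends every null-homotopic chain map to $0$, so that $Q$ descends uniquely to a functor $\overline{Q}\colon\bhcat{\mathcal{A}}\to\bdcat{\mathcal{A}}$ with $Q=\overline{Q}\circ p$. Given a null-homotopic $f\colon V^\bullet\to W^\bullet$ with homotopy $h$, one verifies by direct computation that $f$ factors as $f=g\circ\iota$, where $\iota\colon V^\bullet\to C^\bullet$ is the canonical map into the mapping cone $C^\bullet$ of $\mathrm{id}_{V^\bullet}$ and $g\colon C^\bullet\to W^\bullet$ is assembled from $f$ and $h$. Since $C^\bullet$ is acyclic, the unique morphism $C^\bullet\to 0$ is a quasi-isomorphism, hence becomes invertible in $\bdcat{\mathcal{A}}$; thus $C^\bullet$ is a zero object there, and $Q(f)=Q(g)\circ Q(\iota)$ factors through it, forcing $Q(f)=0$. (Alternatively one can simply quote the standard fact that homotopic chain maps become equal in the derived category, as in Gelfand--Manin \cite{gelfandmethods}.)

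Once $\overline{Q}$ is in hand the rest is a formal manipulation of universal properties. Since $\overline{Q}$ inverts $\overline{W}$ — because $\overline{Q}(p(s))=Q(s)$ is invertible for every $s\in W$ — it factors through the localisation $\ell\colon\bhcat{\mathcal{A}}\to\bhcat{\mathcal{A}}[\overline{W}^{-1}]$, giving a functor $\Phi\colon\bhcat{\mathcal{A}}[\overline{W}^{-1}]\to\bdcat{\mathcal{A}}$. In the other direction, $\ell\circ p\colon\mathrm{Ch}^{\mathrm{b}}(\mathcal{A})\to\bhcat{\mathcal{A}}[\overline{W}^{-1}]$ inverts every quasi-isomorphism (as $p$ carries $W$ into $\overline{W}$, which $\ell$ inverts), so by the universal property of $\bdcat{\mathcal{A}}$ it factors uniquely as $\Psi\circ Q$, giving $\Psi\colon\bdcat{\mathcal{A}}\to\bhcat{\mathcal{A}}[\overline{W}^{-1}]$. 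A short chase with the uniqueness clauses in the universal properties of $Q$ and of $\ell$ — using that $p$ is the identity on objects and surjective on morphisms — then shows that $\Phi\circ\Psi=\mathrm{id}$ and $\Psi\circ\Phi=\mathrm{id}$, so $\Phi$ is the desired equivalence.

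The main obstacle is precisely the crucial step: establishing that null-homotopic maps vanish in $\bdcat{\mathcal{A}}$. Its conceptual content is slight, but the explicit factorisation through the cone of the identity, and keeping track of the attendant sign conventions, requires a little care. Everything after that is a routine diagram chase with universal properties, granting — as already flagged in the footnote accompanying the definition of $\bdcat{\mathcal{A}}$ — that the relevant localisations are well-defined categories.
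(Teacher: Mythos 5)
The paper does not actually supply a proof of this proposition---it is stated bare in the appendix, with the reader implicitly deferred to the standard references \cite{gelfandmethods,verdiercategories2}---so there is no ``paper's argument'' to compare against; your proposal fills a genuine gap. Your overall architecture is the standard and correct one: factor the localisation functor $Q\colon\mathrm{Ch}^{\mathrm{b}}(\mathcal{A})\to\bdcat{\mathcal{A}}$ through the quotient $p\colon\mathrm{Ch}^{\mathrm{b}}(\mathcal{A})\to\bhcat{\mathcal{A}}$, then play the two universal properties off against each other; and you correctly identify that the entire content lies in showing $Q$ kills homotopies.

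One point in that crucial step deserves more care than you give it. Your cone argument shows that a null-homotopic $f$ satisfies $Q(f)=Q(0)$ (and even this needs the observation that the zero complex remains a zero object in the localisation---true, by an induction on zigzags, but not free, since localisation does not preserve limits in general). However, what the descent through $p$ actually requires is that $Q(f)=Q(g)$ whenever $f-g$ is null-homotopic, and you cannot pass from $Q(f-g)=Q(0)$ to $Q(f)=Q(g)$ without knowing that $\Hom$-sets in $\bdcat{\mathcal{A}}$ are abelian groups with $Q$ additive---a fact whose usual proof relies on the calculus of fractions on $\bhcat{\mathcal{A}}$, i.e.\ on the very proposition being proved. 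The non-circular route is the cylinder argument: the two inclusions $i_0,i_1\colon V^\bullet\to\mathrm{Cyl}(V^\bullet)$ are both sections of the quasi-isomorphism $\mathrm{Cyl}(V^\bullet)\to V^\bullet$, hence $Q(i_0)=Q(i_1)$, and a homotopy between $f$ and $g$ assembles into a chain map $H$ with $Hi_0=f$ and $Hi_1=g$, giving $Q(f)=Q(g)$ directly. This is exactly the proof behind the ``standard fact'' you cite from Gelfand--Manin as a fallback, so the gap is easily repaired; but as written, the cone computation proves slightly less than the descent needs.
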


The advantage of describing $\bdcat{\mathcal{A}}$ as in Proposition~\ref{p:homotopy} is that it allows for an easier description of the morphisms; in this guise, morphisms $V^\bullet\to W^\bullet$ are represented (still non-uniquely) by \emph{roofs} of the form
\[\begin{tikzcd}[row sep=small,column sep=small]
&X^\bullet\arrow{dl}[swap]{\sim}\arrow{dr}\\
V^\bullet&&W^\bullet
\end{tikzcd}\]
where both arrows represent morphisms in $\bhcat{\mathcal{A}}$ (that is, homotopy classes of maps of complexes), with the left-hand arrow being a quasi-isomorphism. This advantage derives from the fact that $\bhcat{\mathcal{A}}$, unlike the category with the same objects but with all morphisms of complexes, is a triangulated category, and the localisation in quasi-isomorphisms can be realised as a Verdier localisation \cite{Verdier-These} in the triangulated subcategory of acyclic complexes, those complexes $V^\bullet$ for which $\cohom{i}{V^\bullet}=0$ for all $i$.

Now let us consider the case that $\mathcal{A}=\fgmod{A}$ is the category of finite-dimensional modules over a finite-dimensional algebra $A$; we abbreviate $\bdcat{A}\defeq\bdcat{\fgmod{A}}$ and $\bhcat{A}\defeq\bhcat{\fgmod{A}}$. (Much weaker, but more technical, assumptions are sufficient for what follows.) Recall that an $A$-module $P$ is projective if for every surjective $A$-module map $f\colon X\to Y$ and every $A$-module map $g\colon P\to Y$, there exists $h\colon P\to X$ with $g=f\circ h$.
\[\begin{tikzcd}
&P\arrow{d}{g}\arrow[dashed]{dl}[swap]{\exists h}\\
X\arrow{r}{f}&Y\arrow{r}&0
\end{tikzcd}\]

\begin{defn}
\label{d:kbproj}
Write $\bhcat{\proj{A}}$ for the category whose objects are complexes $P^\bullet$ of $A$-modules such that $P^i$ is projective for all $i$, and $P^i=0$ for $i\ll0$ or $i\gg0$, and with morphisms given by maps of complexes modulo null-homotopic maps. Using the description of $\bdcat{A}$ from Proposition~\ref{p:homotopy}, we see that there is a natural functor $\bhcat{\proj{A}}\to\bdcat{A}$ given by the identity on both objects and morphisms.
\end{defn}

Note that the boundedness conditions in Definition~\ref{d:kbproj} are different from those in Definition~\ref{d:bdcat}, since they refer to the terms of the complex, not the cohomology. Indeed, since $\proj{A}$ is typically not an abelian category, an object $P^\bullet\in\bhcat{\proj{A}}$ does not have well-defined cohomologies $\cohom{i}{P^\bullet}\in\proj{A}$---they are well-defined objects of $\fgmod{A}$, but are not typically projective.

\begin{defn}
We say that a finite-dimensional algebra $A$ has \emph{finite global dimension} if there is an integer $n$ such that each $X\in\fgmod{A}$ fits into an exact sequence
\begin{equation}
\label{eq:projres}
\begin{tikzcd}
0\arrow{r}&P^n\arrow{r}&\cdots\arrow{r}&P^0\arrow{r}{\pi}&X\arrow{r}&0
\end{tikzcd}
\end{equation}
in which each $P^i$ is a projective $A$-module. The minimal such $n$ is called the \emph{global dimension} of $A$, which is defined to be $\infty$ if no such $n$ exists.
\end{defn}
Exactness of the complex \eqref{eq:projres} is equivalent to the morphism
\[\begin{tikzcd}
\cdots\arrow{r}&0\arrow{r}&P^n\arrow{r}&\cdots\arrow{r}&P^1\arrow{r}&P^0\arrow{d}{\pi}\arrow{r}&0\arrow{r}&\cdots\\
\cdots\arrow{r}&0\arrow{r}&0\arrow{r}&\cdots\arrow{r}&0\arrow{r}&X\arrow{r}&0\arrow{r}&\cdots
\end{tikzcd}\]
in $\bhcat{A}$ being a quasi-isomorphism, so that the lower complex is isomorphic in $\bdcat{A}$ to a complex in the image of the natural map $\bhcat{\proj{A}}\to\bdcat{A}$. This observation can be upgraded to the following theorem.

\begin{thm}[{\cite[Ch.~II, Prop.~1.4]{verdiercategories1}}]
\label{t:fin-gldim}
If $A$ is a finite-dimensional algebra with finite global dimension, the natural map $\bhcat{\proj{A}}\to\bdcat{A}$ is an equivalence.
\end{thm}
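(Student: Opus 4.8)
The plan is to prove that the natural functor $F\colon\bhcat{\proj A}\to\bdcat A$ from Definition~\ref{d:kbproj} is an equivalence by establishing separately that it is essentially surjective and fully faithful. It is worth flagging in advance that the hypothesis $\gldim A<\infty$ is used only for essential surjectivity; full faithfulness will hold for the inclusion of bounded complexes of projectives over any finite-dimensional algebra.

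First I would treat essential surjectivity. Starting from an arbitrary object $X^\bullet\in\bdcat A$, I would apply the good truncation functors to replace it, without changing its isomorphism class in $\bdcat A$, by a genuinely bounded complex of modules, say with terms concentrated in degrees $a\le i\le b$. I would then construct a quasi-isomorphism $P^\bullet\to X^\bullet$ with $P^\bullet$ a \emph{bounded} complex of projectives by induction on the width $b-a$. The base case $b=a$ is exactly the statement that the module $X^a$ has a finite projective resolution $0\to P^{-n}\to\dots\to P^0\to X^a\to 0$ with $n\le\gldim A$, which, placed in degrees $a-n,\dots,a$, is the required complex. For the inductive step I would realise $X^\bullet$, up to quasi-isomorphism, as the mapping cone of a morphism between two strictly shorter complexes---concretely via the short exact sequence relating $X^\bullet$ to the brutal truncations $\sigma_{\le b-1}X^\bullet$ and $X^b[-b]$---resolve each of the two shorter complexes by the inductive hypothesis, lift the connecting morphism to a map of resolutions, and take its mapping cone. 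The key point is that the uniform bound $\gldim A$ on the lengths of all the resolutions used keeps $P^\bullet$ bounded below as well as above, so that $P^\bullet$ genuinely lies in $\bhcat{\proj A}$ and $F(P^\bullet)\iso X^\bullet$.

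Next I would establish full faithfulness, whose heart is the lemma that every bounded complex of projectives $P^\bullet$ is homotopy-projective: $\Hom_{\bhcat A}(P^\bullet,C^\bullet)=0$ for every acyclic complex $C^\bullet$. Given a chain map $f\colon P^\bullet\to C^\bullet$, I would build a null-homotopy $h^i\colon P^i\to C^{i-1}$ by descending induction on $i$, beginning just above the top nonzero term of $P^\bullet$; at each step projectivity of $P^i$ provides the required lift and acyclicity of $C^\bullet$ kills the obstruction. Applying this to mapping cones shows that $\Hom_{\bhcat A}(P^\bullet,-)$ sends quasi-isomorphisms to isomorphisms. Since by Proposition~\ref{p:homotopy} the category $\bdcat A$ is the localisation of the triangulated category $\bhcat A$ at the quasi-isomorphisms, a morphism $P^\bullet\to Q^\bullet$ in $\bdcat A$ is represented by a roof $P^\bullet\xleftarrow{s}X^\bullet\to Q^\bullet$ with $s$ a quasi-isomorphism; inverting $s$ using the previous paragraph shows that this roof is the image under $F$ of a genuine homotopy class $P^\bullet\to Q^\bullet$, and the same input shows that two homotopy classes identified in $\bdcat A$ are already equal in $\bhcat A$. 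Hence $\Hom_{\bhcat{\proj A}}(P^\bullet,Q^\bullet)\isoto\Hom_{\bdcat A}(FP^\bullet,FQ^\bullet)$.

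I expect the main obstacle to be essential surjectivity, and within it the verification that the inductively built projective resolution of a bounded complex remains bounded---this is exactly the point that fails when $\gldim A=\infty$, where the construction only reaches bounded-above complexes of projectives. The remaining ingredients---the truncation reductions, the horseshoe-lemma splicing of resolutions, and the descending induction producing the null-homotopy---are standard and essentially formal, requiring only some care in tracking the degrees in which the various complexes are supported.
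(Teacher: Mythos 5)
Your argument is correct, and it is the standard proof of this classical fact. The paper itself offers no proof beyond a citation to Verdier (the preceding text only observes that a finite projective resolution realises a single module as an object in the image of the functor, and says this ``can be upgraded''), so there is nothing in the paper to compare against in detail; your write-up is essentially the upgrade the paper alludes to. Both halves are sound: the essential surjectivity induction via truncations and mapping cones correctly isolates where $\gldim A<\infty$ is needed (note that the paper's $\bdcat{A}$ consists of cohomologically bounded, not termwise bounded, complexes, so your initial reduction via good truncation is genuinely necessary); and the full faithfulness argument via the null-homotopy lemma for maps from a bounded complex of projectives to an acyclic complex, combined with the roof description of morphisms from Proposition~\ref{p:homotopy}, is exactly right. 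One small expository point: lifting the connecting morphism between the resolutions of the two truncations already uses the K-projectivity lemma from your second paragraph (or else the horseshoe lemma applied directly to the short exact sequence of complexes), so that lemma should logically be stated first, but this is a matter of ordering rather than a gap.
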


Theorem~\ref{t:fin-gldim} is very useful, since the category $\bhcat{\proj{A}}$ has a much simpler description than the (equivalent) category $\bdcat{A}$. This theorem covers the case of derived categories of representations of acyclic quivers, which was of interest to us in Section~\ref{s:clust-cat}. Such a quiver $Q$ has a path algebra $\KK Q$, which as a vector space is spanned by the paths of $Q$---this is finite dimensional since $Q$ is acyclic. Multiplication of paths is given by concatenation when this is defined, and $0$ when it is not (cf.\ Remark~\ref{r:pathcat}). Thus $\KK Q$ is a finite-dimensional algebra, and $\rep{Q}$ is equivalent (even isomorphic) to the category $\fgmod{\KK Q}$ of finite-dimensional modules over this algebra. The global dimension of $\KK Q$ is $1$, and so there is an equivalence $\bhcat{\proj{\KK Q}}\isoto\bdcat{\KK Q}$ by Theorem~\ref{t:fin-gldim}.

An algebra $A$ of global dimension $1$ is called \emph{hereditary} (because the property of being a projective $A$-module is inherited by submodules---that is, any submodule of a projective $A$-module is again projective). In this case we can simplify our description of $\bdcat{A}$ still further, to recover our ad hoc description of the objects of $\bdcat{Q}$ from Section~\ref{s:clust-cat}.

\begin{prop}[{\cite[Lem.~4.1]{happelderived}}]
Let $A$ be a hereditary algebra. Then each object $V^\bullet\in\bdcat{A}$ is isomorphic to
\[\begin{tikzcd}
\cdots\arrow{r}{0}&\cohom{i-1}{V^\bullet}\arrow{r}{0}&\cohom{i}{V^\bullet}\arrow{r}{0}&\cohom{i+1}{V^{\bullet}}\arrow{r}{0}&\cdots\end{tikzcd}\]
In other words, if $\Sigma^{-i}M$ denotes the \emph{stalk complex} with $M\in\fgmod{A}$ in degree $i$ and the zero module in all other degrees, then we have
\[V^\bullet\cong\bigoplus_{i\in\ZZ}\Sigma^{-i}\cohom{i}{V^\bullet}.\]
\end{prop}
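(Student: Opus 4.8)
The plan is to argue by induction on the number $w(V^\bullet)$ of indices $i$ for which $\cohom{i}{V^\bullet}\neq 0$, a finite number since $V^\bullet$ lies in $\bdcat{A}$. When $w(V^\bullet)=0$ the complex $V^\bullet$ is acyclic, hence zero in $\bdcat{A}$, and there is nothing to prove; when $w(V^\bullet)=1$, with the unique nonzero cohomology in degree $i$, the standard truncation argument identifies $V^\bullet$ with the stalk complex $\Sigma^{-i}\cohom{i}{V^\bullet}$, and this base case uses nothing beyond $\bdcat{A}$ being the derived category of an abelian category.

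For the inductive step I would assume $w(V^\bullet)\geq 2$ together with the result for all complexes having fewer nonzero cohomologies. Pick an integer $i$ such that $V^\bullet$ has nonzero cohomology both in some degree $\leq i$ and in some degree $>i$ --- possible precisely because $w(V^\bullet)\geq 2$ --- and form the truncation triangle
\[\tau_{\leq i}V^\bullet\to V^\bullet\to\tau_{>i}V^\bullet\xrightarrow{\delta}\Sigma\tau_{\leq i}V^\bullet.\]
Both outer terms have strictly fewer nonzero cohomologies than $V^\bullet$, so the inductive hypothesis gives $\tau_{\leq i}V^\bullet\iso\bigoplus_{j\leq i}\Sigma^{-j}\cohom{j}{V^\bullet}$ and $\tau_{>i}V^\bullet\iso\bigoplus_{j>i}\Sigma^{-j}\cohom{j}{V^\bullet}$.

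The heart of the argument, and the only place the hypothesis $\gldim A = 1$ enters, is to show that $\delta = 0$. Substituting the two decompositions into the identification $\Hom_{\bdcat{A}}(\Sigma^aM,\Sigma^bN)=\Ext^{b-a}_A(M,N)$ yields
\[\Hom_{\bdcat{A}}(\tau_{>i}V^\bullet,\Sigma\tau_{\leq i}V^\bullet)\iso\bigoplus_{j>i,\,k\leq i}\Ext^{1+j-k}_A(\cohom{j}{V^\bullet},\cohom{k}{V^\bullet}),\]
and since $j>i\geq k$ each exponent $1+j-k$ is at least $2$, so every summand vanishes because $A$ is hereditary. A distinguished triangle whose connecting morphism is zero splits --- concretely, $\delta=0$ allows the identity of $\tau_{>i}V^\bullet$ to be lifted along $V^\bullet\to\tau_{>i}V^\bullet$, exhibiting that map as a split epimorphism --- so $V^\bullet\iso\tau_{\leq i}V^\bullet\oplus\tau_{>i}V^\bullet\iso\bigoplus_{j\in\ZZ}\Sigma^{-j}\cohom{j}{V^\bullet}$, completing the induction.

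I expect the essential obstacle to be conceptual rather than computational: one must commit to splitting along truncation triangles and recognise that the obstruction lives in an $\Ext$-group of degree at least $2$; once this strategy is in place the hereditary hypothesis does all the work in a single line, and a direct manipulation of complexes (or of projective resolutions via the equivalence $\bdcat{A}\iso\bhcat{\proj{A}}$) would be much more cumbersome. The one bookkeeping point to get right is that the chosen truncation strictly decreases $w(V^\bullet)$, which is exactly why $i$ must be selected with cohomology present on both sides.
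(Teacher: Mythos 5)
Your argument is correct and complete. Note, however, that the paper offers no proof of its own here: it simply cites Happel, so there is nothing internal to compare against. Your route---induction on the number of nonvanishing cohomologies, splitting the truncation triangle $\tau_{\leq i}V^\bullet\to V^\bullet\to\tau_{>i}V^\bullet\to\Sigma\tau_{\leq i}V^\bullet$ because the connecting morphism lives in $\bigoplus_{j>i,\,k\leq i}\Ext^{1+j-k}_A(\cohom{j}{V^\bullet},\cohom{k}{V^\bullet})$ with every exponent at least $2$---is the standard triangulated-category proof, and all the steps check out: the identification of $\Hom_{\bdcat{A}}$ between shifted stalk complexes with $\Ext$-groups, the strict decrease of $w$ on both truncations, and the splitting of a triangle with zero connecting map. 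The alternative you dismiss as cumbersome (pass to $\bhcat{\proj{A}}$ and use that $\im d^i$, being a submodule of the projective $P^{i+1}$, is itself projective, so each differential splits and the complex decomposes term by term) is in fact quite short and is closer to the argument usually attributed to Happel; it trades the homological-degree bookkeeping of your approach for a direct manipulation of complexes, but requires the equivalence of Theorem~\ref{t:fin-gldim} as input. Either way the hereditary hypothesis does all the work, exactly as you say.
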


Our description of morphisms between stalk complexes in $\bdcat{Q}$ from Section~\ref{s:clust-cat} can also be recovered from the results presented in this appendix, but we will not give the details of this. Roughly, the idea is to use Theorem~\ref{t:fin-gldim} to replace each stalk complex by a quasi-isomorphic complex of projectives, and then compare the computation of the morphisms between these complexes up to homotopy to the usual computation of extension groups between the two representations giving the non-zero terms of the stalk complexes.

\section*{Acknowledgements}
These notes were written to accompany a series of three lectures given in October 2020 at the LMS Autumn Algebra School, funded by the London Mathematical Society, the European Research Council and the International Centre for Mathematical Sciences in Edinburgh. I would like to thank David Jordan, Nadia Mazza and Sibylle Schroll for organising the lecture series, Karin Baur and Bethany Marsh for their comments on early drafts of the notes, and the students attending the lectures for their interest and insightful questions.

\defbibheading{bibliography}[\refname]{\section*{#1}}\printbibliography
\end{document}